\newcommand{\ee}{\mathbf{e}}
\def \be {  \varpi}
\newcommand{\fer}[1]{(\ref{#1})}
\def\MF{ {\mathcal {F}}}
\def\MG{ {\mathcal {G}}}
\def\MI{ {\mathcal {I}}}
\newcommand{\R}{\mathbb R}
\def\be#1\ee{\begin{equation}#1\end{equation}}
\newcommand{\bq}{\begin{equation}}
\newcommand{\eq}{\end{equation}}
\newtheorem{thm}{Theorem}
\theoremstyle{remark}
\newtheorem{rem}[thm]{Remark}
\theoremstyle{definition}
 \numberwithin{equation}{section}
\newenvironment{equations}{\equation\aligned}{\endaligned\endequation}
\title[Fokker--Planck equations and inequalities for heavy tailed densities]{Fokker--Planck equations and one--dimensional functional inequalities for heavy tailed densities }
\author{GIULIA FURIOLI}
\thanks{DIGIP, University of Bergamo, viale Marconi 5, 24044 Dalmine, Italy\\
giulia.furioli@unibg.it}
\author{ADA PULVIRENTI}
\thanks{Department of Mathematics, University of Pavia, 
via Ferrata 5,
Pavia, 27100 Italy\,\,\,\\
ada.pulvirenti@unipv.it}
\author{ELIDE TERRANEO}
\thanks{Department of Mathematics,
University of Milan, via Saldini 50, 20133 Milano, Italy \\
elide.terraneo@unimi.it}
\author{GIUSEPPE TOSCANI}
\thanks{Department of Mathematics, University of Pavia, and IMATI of CNR
via Ferrata 5,
Pavia, 27100 Italy,\\
giuseppe.toscani@unipv.it}
\begin{document}
\maketitle
%
%
\begin{center}\small
\parbox{0.85\textwidth}{

\textbf{Abstract.} 
We study one-dimensional functional inequalities of the type of  Poincar\'e, logarithmic Sobolev and Wirtinger, with weight, for probability densities with polynomial tails.   As main examples, we obtain sharp inequalities satisfied by inverse Gamma densities,  taking values on $\R_+$, and  Cauchy-type  densities, taking values on $\R$. In this last case, we improve the result obtained by Bobkov and Ledoux in 2009 by introducing a better weight function in the logarithmic Sobolev inequality. The results are obtained by resorting to Fokker--Planck type equations which possess these densities as steady states. }
\medskip

\textbf{Keywords}{ Cauchy-type distribution; inverse Gamma distribution;  Fokker--Planck equations; Log--Sobolev inequalities; Poincar\'e inequalities; Chernoff inequalities; Wirtinger inequalities.}

\medskip


\end{center}

\section{Introduction}\label{intro}

This paper deals with one-dimensional functional inequalities of the type of  Poincar\'e, logarithmic Sobolev and Wirtinger, with weight, for probability densities with polynomial tails. These inequalities are strongly related with the problem of convergence to equilibrium for Fokker--Planck type equations, where the main examples are furnished by the inverse Gamma and Cauchy-type probability densities.  

Let $X$ be a random variable distributed with probability density $f(x)$, where  $x\in \MI \subseteq \R$. The random variable $X$ is said to satisfy a weighted Poincar\'e-type inequality with weight function $w(x)$ (where $w$ is a fixed nonnegative, Borel measurable function), if for any bounded smooth function $\phi$ on $\MI$ 
 \be\label{Poi}
 Var\left[\phi(X)\right] \le E\left\{w(X)[\phi'(X)]^2\right\}.
 \ee
 As usual, for a given random variable $Y$, $E(Y)$ denotes its expectation value, and
 \[
 Var\left[\phi(X)\right] = \int_{\MI} \phi^2(x)\, f(x) \, dx - \left(\int_{\MI} \phi(x)\, f(x) \, dx \right)^2
 \]
is the variance of $\phi$ with respect to $f$.  
 Likewise, $X$ is said to satisfy a weighted logarithmic Sobolev inequality with weight function $w(x)\ge 0$ if, for any bounded smooth function $\phi$ on $\MI$ 
 \be\label{PoiS}
 Ent\left[\phi^2(X)\right] \le E\left\{w(X)[\phi'(X)]^2\right\}.
 \ee
 Here
 \[
 Ent \left[\phi^2(X)\right] = \int_{\MI} \phi^2(x)\log \phi^2(x)\, f(x) \, dx - \left(\int_{\MI} \phi^2(x)\, f(x) \, dx\right ) \, \log \left(\int_{\MI} \phi^2(x)\, f(x) \, dx \right )
 \]
 denotes the entropy of $\phi^2$ with respect to $f$. 
 Last, the random variable $X$ is said to satisfy a weighted Wirtinger-type inequality with weight function $K(x)$ (where $K$ is a fixed nonnegative, Borel measurable function), if for any bounded smooth function $\phi$ on $\MI$, and $p \ge 1$
 \be\label{Wir}
 E\left\{|\phi(X)|^p\right\}] \le E\left\{K(X)^p|\phi'(X)|^p\right\}.
 \ee

 The inequalities are understood in the following sense: if the right-hand side is finite, then  the inequalities hold true.

Abstract weighted Poincar\'e and logarithmic Sobolev inequalities are connected with the problem of large deviations of Lipschitz functions and measure concentration. In reason of that, the question whether a probability measure satisfies such functional inequalities has attracted a lot of attention in recent years  \cite{BCG,BL,BJ,BJM1,BJM2,CGGR,Goz}. 

In the probabilistic literature, inequality \fer{Poi} is also known under the name of weighted Chernoff inequality, in reason of the analogous inequality with weight $w(x) =1$ obtained by Chernoff \cite{Che} for the one-dimensional Gaussian density 
 \be\label{Max}
 g(x) = \frac 1{\sqrt{2\pi}} \exp\left\{-\frac{x^2}2\right\}, \qquad x \in \R.
 \ee
Chernoff-type inequalities with weight were proven, few years later Chernoff's result, by Klaassen \cite{Kla}, who listed a number of probability densities for which the weight $w(x)$ was explicitly computable.  A different proof of Chernoff-type inequalities with weight, valid for heavy tailed densities, has been recently obtained in \cite{FPTT} by resorting to the representation of these densities as equilibria of Fokker--Planck type equations with variable coefficient of diffusion and linear drift. 

The analysis of \cite{FPTT} was deeply motivated by the study of Fokker--Planck type equations appearing in the modeling of social and economic phenomena,  a challenging research activity in the communities of both physicists and applied mathematicians, who classified the fields of research with the names of socio-physics and, respectively, econophysics \cite{FPTT, NPT, PT13}. 

One of the typical features of these phenomena is related to the tails of the underlying steady distribution, which are often characterized by polynomial decay at infinity \cite{New}.
The classical example is furnished by  the study of the distribution of wealth among trading agents, that leads to a Fokker--Planck type equation with variable coefficients of diffusion and linear drift \cite{BM, CoPaTo05}. This equation, which describes the time-evolution of the density $f(x,t)$ of a system of agents with personal wealth $x\ge0$ at time $t \ge 0$ reads
  \be\label{FPw}
 \frac{\partial f(x, t)}{\partial t} = \frac \sigma{2}\frac{\partial^2 }
 {\partial x^2}\left( x^2 f(x,t)\right) + \lambda \frac{\partial }{\partial x}\left(
 (x-1) f(x,t)\right).
 \ee
In \fer{FPw},  $\sigma$ and $\lambda$ denote  positive constants related to essential properties of the trade rules of the agents.  
By fixing the mass density equal to unity, the unique steady state  of equation \fer{FPw} is the inverse Gamma density 
 \be\label{equi2}
f_\infty(x) =\frac{\mu^{1+\mu}}{\Gamma(1+\mu)}\frac{\exp\left(-\frac{\mu}{x}\right)}{x^{2+\mu}},
 \ee
 characterized by the positive constant $\mu$,  given by
  \be\label{mu}
  \mu = 2 \frac{\lambda}\sigma.
 \ee
This stationary distribution, in agreement with the analysis of the Italian economist Vilfredo Pareto \cite{Par}, exhibits a power-law tail for large values of the wealth  variable.

One of the physically relevant questions related to the Fokker--Planck equation \fer{FPw} is the know\-ledge of the exact rate of relaxation to equilibrium of its solution, which,  as it happens for the classical Fokker--Planck equation \cite{To99}, is expected to be exponential in time.  This rate of relaxation would in fact justify that in real economies the  wealth distribution profile is always well fitted, for large values of the wealth variable, by  the inverse Gamma \cite{PT13}.

The study of relaxation to equilibrium for the solution to equation \fer{FPw} treasured the successful methodology  used for the classical Fokker--Planck equation, corresponding to constant coefficient of diffusion and linear drift.  Hence,  the relaxation of the solution of \fer{FPw} towards equilibrium was usually investigated by looking at the time evolution of its Shannon entropy relative to the equilibrium density \cite{FPTT}.  We recall that, given two probability densities $f(x)$ and $g(x)$, with $x \in \mathcal I \subseteq \R$, the Shannon entropy of $f$ relative to  $g$ is defined by
 \be\label{rel-H}
H(f|g) = \int_{\mathcal I} f(x) \log \frac{f(x)}{g(x)}\, dx.
 \ee
This argument has its roots in classical statistical physics, and, similarly to the classical kinetic theory of rarefied gases, identifies the steady state solution of the Fokker--Planck equation \fer{FPw} as the target density to be reached monotonically in time in relative entropy \cite{FPTT}. 
The proof of the exponential convergence of the solution to the classical Fokker--Planck equation towards the Maxwellian (Gaussian) density \fer{Max} follows by applying the logarithmic Sobolev inequality \cite{To97, To99}, which establishes a sharp bound of the relative entropy  in terms of the entropy production. This introduces 
a deep link between differential inequalities of Sobolev type and Fokker--Planck  equations. While the standard logarithmic Sobolev inequality allows us to prove exponential convergence of the solution towards the Maxwellian equilibrium density \fer{Max} in relative entropy, at the same time the evolution of the relative entropy of the solution density of the Fokker--Planck equation can be used to obtain a dynamical proof of the logarithmic Sobolev inequality  \cite{To97,To99}. 
 This idea has been subsequently extended, to obtain sharp differential inequalities,  to  Fokker--Planck type equations with constant diffusion term and general drift by Otto and Villani \cite{OV}. 

As a matter of fact, however, even if various weaker results are available \cite{TT},  a proof of exponential convergence in relative entropy of the solution to the Fokker--Planck equation \fer{FPw} towards the inverse Gamma \fer{equi2} is at present not available. In a recent paper \cite{FPTT20} a possible motivation of this unpleasant difference in convergence between the classical and the wealth Fokker--Planck equations has been identified in the choice of a Maxwellian (constant) interaction kernel,  made in \cite{CoPaTo05},  in the kinetic equation leading to \fer{FPw}. Without going to detail regardind this discussion about the modeling assumptions, that the interested reader can find in \cite{FPTT20},  the introduction of a variable collision kernel  led to build a new Fokker--Planck equation with variable coefficient of diffusion and variable drift, still describing the time-evolution of the density $f(x,t)$ of a system of agents with personal wealth $x\ge0$ at time $t \ge 0$.  
This new Fokker--Planck equations reads
\begin{equation}\label{FPz}
 \frac{\partial f(x,t)}{\partial t} =   \frac\sigma 2 \frac{\partial^2 }{\partial x^2}
 \left(x^{2+\delta}f(x,t)\right )+ 
\lambda\, \frac{\partial}{\partial x}\left( x^\delta(x-1 ) f(x,t)\right).
 \end{equation}
 In \fer{FPz}  $\delta$ is a positive constant, with $0<\delta\leq1$. Equation \fer{FPz} has a unique equilibrium density of unit mass, still given by an inverse Gamma function
 \be\label{new-equ}
 f_\infty^\delta(x) = \frac{\mu^{1+\delta + \mu}}{\Gamma(1+\delta + \mu)}\frac{\exp\left(-\frac{\mu}{x}\right)}{x^{2+\delta +\mu}}.
 \ee
In \fer{new-equ} $\mu$ is the positive constant defined in \fer{mu}. Hence, the presence of the constant $\delta$ is such that the Pareto index in the equilibrium density of the target Fokker--Planck equation is increased by the amount $\delta>0$. Note that this class of Fokker--Planck type equations contains \fer{FPw}, which is obtained in the limit $\delta \to 0$.

As proven in \cite{FPTT20},  and in contrast to equation \fer{FPw}, the solution to the Fokker--Planck equation \fer{FPz}   has been shown to converge exponentially, with explicit rate, towards the equilibrium  density \fer{new-equ}. For this reason, equation \fer{FPz} has been proposed as a better model for the description of the process of relaxation of the wealth distribution density in a multi-agent society \cite{FPTT20}.

A critical comparison of the two Fokker--Planck equations \fer{FPw} and \fer{FPz} allows us to come to some interesting conclusions, that will be at the basis of the results of this paper. In view of the previous discussion, the inverse Gamma density can appear as the steady state of different Fokker--Planck equations, which can share several properties in relation with convergence to equilibrium. Indeed, by looking at the computations in \cite{FPTT20}, the evolution of the  Shannon entropy of the solution to the Fokker--Planck equation \fer{FPz} relative to the equilibrium solution depends on the parameter $\delta$, that appears in the entropy production term. This suggests that in order to get sharp differential inequalities for a certain probability density with heavy tails, one has to look for the most general class of Fokker--Planck type equations which possess this probability density as steady state, aiming in finding the optimal one. 

It is interesting to remark that this strategy is not restricted to differential inequalities of Sobolev type, but it can be fruitfully applied also to Chernoff (Poincar\'e) type inequalities, thus generalizing the result obtained in \cite{FPTT}. 

In addition to the class of inverse Gamma functions, this new method will be applied to obtain weighted inequalities for the class of Cauchy-type densities $f_\beta(x)$, $\beta >1/2$,  with $x \in \R$
\be\label{cau}
f_\beta(x)= \frac {C_\beta}{(1+x^2)^\beta}.
\ee
Weighted inequalities for Cauchy-type densities have been studied by Bobkov and Ledoux in  dimension $d \ge 1$ in \cite{BL} with a different technique. In one dimension of the space variable, by resorting to this relationship with Fokker--Planck type equations, we will  improve in some cases the weight function obtained in \cite{BL} in the logarithmic Sobolev inequality. Also, we will show that the optimal results in weighted Poincar\'e inequality for Cauchy-type densities obtained in \cite{BJM2} can follow by resorting to this idea. 

The content of the paper is as follows. In Section \ref{sec:chernoff} we will prove an extension of the classical Chernoff inequality obtained in \cite{FPTT}, and we apply the result to Cauchy-type and inverse Gamma densities. Likewise, Section \ref{sec:sobolev} and will contain the results about weighted logarithmic Sobolev inequalities for the same classes of probability densities. Both Sections \ref{sec:chernoff} and \ref{sec:sobolev} will take advantage of the representation of Cauchy-type and inverse Gamma densities as steady solutions to Fokker--Planck type equations of type \fer{FP-gen}. 
Finally, Section \ref{sec:wirtinger} will contain an improvement of an old result by Elcrat and MacLean \cite{EM} about weighted Wirtinger inequalities on unbounded domains, with application to the Cauchy-type and inverse Gamma functions,  densities for which explicit weight functions are obtained. In this case,  the underlying probability density is characterized as the equilibrium density of a Fokker--Planck  with a positive bounded coefficient of diffusion and an elementary coefficient of drift.

To end this introduction, it is important to outline that the strategy used in this paper can be fruitfully used to obtain differential inequalities with weight for other densities.


\section{Chernoff--type inequalities for heavy tailed densities}\label{sec:chernoff}

 The aim of this Section is to prove by means of their relationship with Fokker--Planck type equations that the class of densities \fer{new-equ} and \fer{cau} satisfy some sharp weighted inequalities of  Chernoff  type. In the rest of this Section, we refer to  a class  of Fokker--Planck type equations with variable coefficients of diffusion and drift in the form
\begin{equation}\label{FP-gen}
 \frac{\partial f(x,t)}{\partial t} =   \frac{\partial^2 }{\partial x^2}
 \left(P(x)f(x,t)\right )+ 
\, \frac{\partial}{\partial x}\left( Q(x) f(x,t)\right),
 \end{equation}
where $x \in \MI =(i_-,i_+) \subseteq \R$.  In equation \fer{FP-gen} the coefficients of the diffusion $P(x)$  and the drift $Q(x)$ are smooth  functions, and $P(x) \ge 0$. We suppose moreover that $P(x)$ and $Q(x)$ are such that, for any $a\in \MI$, the steady state 
 \be\label{eq}
 f_\infty(x) = \frac C{P(x)} \exp \left\{  - \int_a^x\, \frac{Q(y)}{P(y)}\, dy\right\}.
 \ee
 is a probability density supported in $\MI$ for a given value of the constant $C$.
 Note that the steady state \fer{eq} satisfies the first order differential equation
\begin{equation}\label{ste1}
   \frac{\partial }{\partial x}
 \left(P(x)f_\infty(x)\right )+ 
\,  Q(x) f_\infty(x) = 0.
 \end{equation}
The case 
\be\label{facile}
Q(x) = x-M, \quad M \in (i_-,i_+)
\ee
has been considered and studied in \cite{FPTT}, resorting to a clear  proof that is closely related to the Fokker--Planck description of the equilibria, as given by \fer{ste1}. In \cite{FPTT} it was proven that, 
if $X$ is a random variable distributed with density $f_\infty(x)$, $x \in \MI \subseteq \R$, and $f_\infty$ satisfies the differential equality
\be\label{staz-2}
\frac{\partial }{\partial x}\left(P(x)  f_\infty(x) \right) + (x -M)\,f_\infty(x) = 0, \quad x\in \MI, 
 \ee
then for any smooth function $\phi$  defined  on $\MI$ such that $\phi(X)$ has finite variance  
\be\label{ch-gen}
 Var[\phi(X)] \le E\left\{P(X)[\phi'(X)]^2\right\}
\ee
 with equality if and only if $\phi(X)$ is linear in $X$.  Note that when $Q(x)$ is linear in $x$, the weight function $w$  coincides with the variable coefficient of diffusion $P(x)$. In what follows, we extend the result of \cite{FPTT} to cover a larger class of functions $Q(x)$.

\subsection{An extension of Chernoff-type inequality} 

\begin{thm}[Chernoff with weight]\label{C}
Let $X$ be a random variable distributed with density $f_\infty(x)$, $x \in \MI =(i_-, i_+)\subseteq \R$. Let us suppose moreover that $f_\infty$ satisfies \fer{ste1}, where $Q(x)$ is a smooth function strictly increasing on $\MI$ such that \be\label{bordi}
 \lim_{x\to i_-} Q(x)<0,\quad \lim_{x\to i_+} Q(x) >0.
 \ee
 Let $w(x)$ be defined by  
 \be\label{mag}
 w(x) = \frac {P(x)}{Q'(x)},\quad x\in \MI.
 \ee
 Then,  for any  smooth function $\phi$  on $\MI$, and $\phi(X)$ with finite variance  
\[
 Var[\phi(X)] \le E\left\{w(X)[\phi'(X)]^2\right\},
\]
that is 
 \be\label{Ch-gen}
 \int_\MI \phi^2(x) f_\infty(x) \, d x -\left(  \int_\MI \phi(x) f_\infty(x) \, d x\right )^2 \leq   \int_\MI w(x)  \left(\phi'(x)\right )^2 f_\infty(x) \, d x.
 \ee
\end{thm}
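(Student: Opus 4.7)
The natural approach is to reduce the statement to the linear-drift version of the Chernoff-type inequality \fer{staz-2}--\fer{ch-gen} already proved in \cite{FPTT}, by means of the change of variable $u = Q(x)$. Since $Q$ is smooth and strictly increasing on $\MI=(i_-,i_+)$ with the boundary behavior \fer{bordi}, the map $Q$ is a smooth bijection from $\MI$ onto an open interval $\tilde\MI$ containing $0$, with smooth inverse $Q^{-1}$.

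Define the push-forward density $\tilde f(u)=f_\infty(Q^{-1}(u))/Q'(Q^{-1}(u))$, which integrates to one on $\tilde\MI$, and the transported diffusion coefficient $\tilde P(u)=P(Q^{-1}(u))\,Q'(Q^{-1}(u))$. A direct computation gives the identity $\tilde P(u)\tilde f(u)=P(x)f_\infty(x)$ at $x=Q^{-1}(u)$, so differentiating in $u$ and invoking the stationary relation \fer{ste1} yields
\[
\frac{d}{du}\bigl(\tilde P\tilde f\bigr)(u)=\frac{1}{Q'(x)}\frac{d}{dx}\bigl(P(x)f_\infty(x)\bigr)\bigg|_{x=Q^{-1}(u)}=-\frac{Q(x) f_\infty(x)}{Q'(x)}\bigg|_{x=Q^{-1}(u)}=-u\,\tilde f(u).
\]
Hence $\tilde f$ is the stationary density on $\tilde\MI$ of a Fokker--Planck equation with diffusion $\tilde P$ and linear drift $u$, placing us exactly in the setting \fer{staz-2} with $M=0\in\tilde\MI$.

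I would then apply the base inequality \fer{ch-gen} to the test function $\tilde\phi(u)=\phi(Q^{-1}(u))$, which is smooth on $\tilde\MI$ and whose variance under $\tilde f$ coincides with $Var[\phi(X)]$ by construction, obtaining
\[
Var[\tilde\phi(\tilde X)]\le\int_{\tilde\MI}\tilde P(u)\,\bigl(\tilde\phi'(u)\bigr)^2\tilde f(u)\,du.
\]
Using $du=Q'(x)\,dx$ and $\tilde\phi'(u)=\phi'(x)/Q'(x)$, the right-hand side collapses, after the obvious cancellations, to
\[
\int_\MI\frac{P(x)}{Q'(x)}\bigl(\phi'(x)\bigr)^2 f_\infty(x)\,dx,
\]
which is precisely the right-hand side of \fer{Ch-gen} with the weight \fer{mag}.

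The only point requiring care is verifying that the transformed data $(\tilde\MI,\tilde f,\tilde P)$ satisfy the hypotheses of the linear-drift theorem of \cite{FPTT}: nonnegativity of $\tilde P$ follows from $P\ge 0$ and $Q'>0$; the fact that $\tilde f$ integrates to one is a change of variable; the condition $M=0\in\tilde\MI$ is exactly \fer{bordi}; and finite variance of $\tilde\phi(\tilde X)$ is equivalent, again by change of variable, to finite variance of $\phi(X)$. I expect no genuine analytic obstacle beyond this bookkeeping: the content of the statement is that the extra generality of allowing a monotone drift $Q(x)$ in place of an affine one is absorbed by the diffeomorphism $Q$, and the Jacobian factor $Q'$ is precisely what transforms the weight $P$ of the linear-drift case into the weight $P/Q'$ of \fer{mag}.
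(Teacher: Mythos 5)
Your proposal is correct, and it takes a genuinely different (and arguably more structural) route than the paper. You reduce the monotone-drift case to the linear-drift Chernoff inequality of \cite{FPTT} by pushing everything forward through the diffeomorphism $u=Q(x)$: the transported density $\tilde f = (f_\infty/Q')\circ Q^{-1}$, the transported diffusion $\tilde P = (PQ')\circ Q^{-1}$, and the verification that $\frac{d}{du}(\tilde P\tilde f) + u\tilde f = 0$ place the transformed problem squarely in the setting of \fer{staz-2} with $M=0\in\tilde\MI$ (the inclusion $0\in\tilde\MI$ is exactly what \fer{bordi} provides, and $\tilde P\ge 0$ follows from $P\ge 0$ and $Q'>0$). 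The back-substitution correctly produces the Jacobian factor $Q'$ that converts the weight $P$ into $P/Q'$. The paper instead gives a self-contained proof from scratch: it bounds the variance by $\int(\phi-\psi(0))^2 f_\infty$, writes $\phi=\psi\circ Q$, uses the fundamental theorem of calculus plus Jensen's inequality, integrates by parts against \fer{ste1}, and exploits the identity $\partial_y(y\beta(ty))=\partial_t(t\beta(ty))$ — essentially redoing the \cite{FPTT} argument with the change of variable folded in. Your version exposes the underlying structure more cleanly (the extra generality of a monotone drift is absorbed entirely by a diffeomorphism and a Jacobian), at the cost of using the linear-drift theorem as a black box rather than producing a standalone argument.
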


\begin{proof}

Let $\phi$ be a  smooth function  on $\MI$ such that  $Var(\phi(X))$ is bounded.
Since $\int_\MI f_\infty(x) dx =1$, for any given constants $A\in \R$ it holds
\[
Var(\phi(X)) = \int_\MI \phi^2(x) f_\infty(x) \, d x -\left(  \int_\MI \phi(x) f_\infty(x) \, d x\right )^2 \leq \int_\MI(\phi(x)-A)^2 f_\infty(x) dx.
\]
Now, since $Q(x)$ is strictly increasing on $\MI$, and satisfies conditions \eqref{bordi},   there is a point $x_0\in \MI$ where $Q(x_0)=0$. Let us consider the change of variable $x \to Q(x)$, which is invertible for $x \in \MI$ due to the assumptions on $Q$ and let us define
\be\label{psi}
\phi(x)= \psi(Q(x)). 
\ee
If we set  $A= \phi(x_0)= \psi(0)$,  we get
\[
\begin{aligned}
\int_\MI(\phi(x)-\psi(0))^2 f_\infty(x) )dx &= \int_\MI( \psi(Q(x))-\psi(0))^2 f_\infty(x) dx \\
=\int_\MI  \left(\int_0^{Q(x)} \psi'(s) ds\right )^2 f_\infty(x) dx
&= \int_\MI  \left(\int_0^{1} \psi'(t\, Q(x)) Q(x) dt\right )^2 f_\infty(x) dx.
\end{aligned}
\]
Now by Jensen's inequality
\[
\begin{aligned}
 \int_\MI  f_\infty(x)  \left(\int_0^{1} \psi'(t\,Q(x))Q(x) dt\right )^2 dx & \leq  \int_\MI  f_\infty(x) \left( \int_0^{1}  \big( \psi'(t\, Q(x)) Q(x)\big)^2 dt \right )dx\\
&=  \int_\MI  f_\infty(x) Q^2(x)  \left(  \int_0^{1} \left(\psi'(t\, Q(x))\right)^2 dt\right ) dx.
\end{aligned}
\]
Using that $f_\infty$ satisfies \fer{ste1}  we get
\begin{equations} \label{bello}
&\int_\MI  f_\infty(x) Q^2(x)  \left(  \int_0^{1} \left(\psi'(t\, Q(x))\right)^2 dt\right ) dx \\
& = -  \int_\MI \partial_x \big(P (x)f_\infty(x)\big) Q(x) \left(  \int_0^{1} \left(\psi'(t\, Q(x))\right)^2 dt\right) dx 
  \\
  &= \left[-P(x)f_\infty(x) Q(x) \left(  \int_0^{1} \left(\psi'(t\,Q(x))\right)^2 dt  \right ) \right]_{i_-}^{i_+} \\ & +  \int_\MI P(x)f_\infty(x) \partial_x\left(Q(x)  \int_0^{1} \left(\psi'(t\,Q(x))\right)^2 dt\right ) dx.
  \end{equations}
The boundary term in \fer{bello}, due to assumption \eqref{bordi} is non positive. In view of the identity 
 \[
\partial_y ( y\beta (t y))= \partial_t(t\beta (t y)), 
 \] 
 valid for any function $\beta(\cdot)$, and variables $y$ and $t$ we get
  \begin{equations}\label{bello2}
   &\int_\MI P(x)f_\infty(x) \partial_x\left(Q(x)  \int_0^{1} \left(\psi'(t\,Q(x))\right)^2 dt\right ) dx\\
  & =  \int_\MI P(x)f_\infty(x) Q'(x) \left.\partial_y \left(y \int_0^{1} \left(\psi'(t\, y)\right)^2  dt\right )\right|_{y =Q(x)} dx\\
  &=  \int_\MI P(x)f_\infty(x) Q'(x)\left.  \int_0^{1}   \partial_y \big( y\psi'(t\, y)^2 \big)\right|_{y =Q(x)}dt   dx\\
  &=  \int_\MI P(x)f_\infty(x) Q'(x)  \int_0^{1}   \partial_t \left(t \psi'(t\, Q(x))^2\right) dt   dx\\
  &=  \int_\MI P(x)f_\infty(x) Q'(x) \psi'(Q(x))^2 dx.
  \end{equations}
  Now, differentiation of \eqref{psi} gives
  \[
  \phi'(x) = \psi'(Q(x)) Q'(x),
  \]
  and 
  \be\label{psi-der}
   \psi'(Q(x)) = \frac {\phi'(x)}{Q'(x)}.
   \ee
   Replacing equality \fer{psi-der} into the last integral in \fer{bello2}, and using the relation \eqref{mag} it follows that
  \[
   \int_\MI P(x)f_\infty(x) Q'(x) \psi'(Q(x))^2 dx   =  \int_\MI w(x)
   \left( \phi'(x)\right)^2  f_\infty(x)dx.
   \]
   Finally we have
   \[
   Var(\phi(X)) \leq   \int_\MI w(x)
   \left( \phi'(x)\right)^2  f_\infty(x)dx,
   \]
   and the proof is completed.
\end{proof}
\begin{rem} Even if the main applications of Theorem \ref{C} refer to probability densities with heavy tails, it is interesting to remark that the case $P(x) = 1$ leads to a functional inequality related to weighted Poincar\'e inequalities, known as the Brascamp-Lieb inequality \cite{BLieb}. If $P(x) =1$, the steady state $f_\infty$ takes the form
\be\label{eq-lieb}
 f_\infty(x) =  C \exp \left\{  - \int_a^x\, Q(y)\, dy\right\},
 \ee
and the condition $Q'(x) >0$ of Theorem \ref{C} corresponds to assume that the potential 
 \[
 V(x) = \int_a^x\, Q(y)\, dy
 \]
is strictly convex. In this case inequality \fer{Ch-gen} can be written in terms of the strictly convex potential $V(x)$ to give
 \be\label{BLieb}
  Var[\phi(X)] \le  \int_\MI \frac 1{V''(x)}  \left(\phi'(x)\right )^2 f_\infty(x) \, d x.
 \ee
Inequality \fer{BLieb} is exactly the Brascamp-Lieb inequality in dimension one \cite{BLieb}.
\end{rem}

Theorem \ref{C} allows us to prove Chernoff-type inequalities with weight for various families  of  probability densities on the line with heavy tails.  We present below two examples, which refer to the family of the Cauchy--type densities \fer{cau} in the range $\beta >1/2$,   and to the family of inverse Gamma densities \fer{equi2}. To maintain the analogy with the Cauchy-type densities, we will write the inverse Gamma densities in the form
 \be\label{inv-gamma}
 h_{\beta,m}(x)= \frac {C_{\beta, m}}{x^{2\beta}} e^{-\frac mx},\quad x\in\R_+.
 \ee
where $\beta >1/2$ and $m>0$. The constant $C_{\beta,m}$  is explicit, and it is such that the functions $f_{\beta, m}$ have unit mass. 

It is interesting to remark that, as far as the Cauchy-type densities are concerned,  the same  inequalities with weight  have been recently obtained in \cite{BJM2}, by resorting to  the spectral gap of a convenient Markovian diffusion operator, and then using a recent result \cite{BJ}, which allows the authors to estimate precisely this spectral gap.  While for $\beta >3/2$ the present proof, which in our setting corresponds to choosing $Q(x) = x$, is similar to that of \cite{BJM2}, the method of proof in the case $1/2 < \beta < 3/2$ is new, and makes a substantial use of Theorem \ref{C}.

\subsection{ Chernoff with weight for Cauchy--type densities}

\begin{thm} [Chernoff for Cauchy--type densities]\label{Ch-Cauchy}

Let $X$ be a random variable distributed with the Cauchy-type density \fer{cau}, with  $\beta >1/2$. For any smooth function $\phi(x)$, with $x\in\R$ such that  $\phi(X)$ has finite variance, one has the bounds
\be
\label{chernoff-gen}
 Var[\phi(X)] \le \frac 1{\rho(\beta)} E\left\{(1+X^2)[\phi'(X)]^2\right\},
\ee
 where
\be
\rho(\beta)= 
\begin{cases}
\left(\beta-\frac 12\right )^2 & \frac12 <\beta \leq \frac 32\\
2(\beta-1) & \beta > \frac 32.
\end{cases}
\ee
\end{thm}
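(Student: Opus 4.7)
The strategy is to apply Theorem~\ref{C} to the Cauchy-type density $f_\beta(x)=C_\beta/(1+x^2)^\beta$ via different choices of the drift $Q(x)$. Given such a $Q$, the diffusion $P(x)$ is forced by the stationary identity $(Pf_\beta)'+Qf_\beta=0$ (together with the boundary condition $Pf_\beta\to 0$ at infinity), and Theorem~\ref{C} produces a weighted Chernoff-type inequality with weight $w(x)=P(x)/Q'(x)$. The proof splits according to whether $\beta>3/2$ or $1/2<\beta\le 3/2$, each calling for a different $Q$.

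When $\beta>3/2$, take $Q(x)=x$: it is smooth, strictly increasing, with $Q(\pm\infty)=\pm\infty$, so the hypotheses of Theorem~\ref{C} are trivially met. A direct integration of $(Pf_\beta)'=-xf_\beta$, with the integration constant fixed by $Pf_\beta\to 0$ at infinity (admissible because $\beta>1$), yields $P(x) = (1+x^2)/(2(\beta-1))$. Since $Q'\equiv 1$, the weight is exactly $P$, and Theorem~\ref{C} delivers \eqref{chernoff-gen} with $\rho(\beta)=2(\beta-1)$.

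When $1/2<\beta\le 3/2$, the choice $Q(x)=x$ either produces a strictly worse constant (for $1<\beta<3/2$) or becomes inadmissible (for $\beta\le 1$, since $\int xf_\beta\,dx$ diverges), so a genuinely different $Q$ is needed. To target the sharp constant $(\beta-1/2)^2$, impose $w(x)=(1+x^2)/(\beta-1/2)^2$ from the outset, which via $w=P/Q'$ is equivalent to $P(x)=(1+x^2)Q'(x)/(\beta-1/2)^2$. Substituting into the stationary identity reduces the problem to the linear second-order ODE
\[
(1+x^2)Q''(x)+2(1-\beta)xQ'(x)+(\beta-1/2)^2\,Q(x)=0.
\]
Since $1+x^2$ is positive on $\R$, this ODE is regular on the whole line and admits a unique (up to scalar) odd global analytic solution, which a power-series calculation identifies as
\[
Q(x) = x\cdot {}_2F_1\!\left(\tfrac{3}{4}-\tfrac{\beta}{2},\;\tfrac{3}{4}-\tfrac{\beta}{2};\;\tfrac{3}{2};\;-x^2\right),
\]
specialising to $Q(x)=x$ at the boundary $\beta=3/2$.

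The main obstacle is then to verify the two hypotheses of Theorem~\ref{C} for this $Q$: strict monotonicity $Q'(x)>0$ on $\R$, and the boundary behaviour $Q(\pm\infty)=\pm\infty$. The latter follows from Frobenius analysis at $x=\infty$, where the indicial equation has the double root $\beta-1/2>0$, forcing $Q(x)\sim c\,\mathrm{sgn}(x)\,|x|^{\beta-1/2}$ with $c>0$. Strict monotonicity is the harder step; the plan is to obtain it by applying an Euler-type transformation to rewrite $Q(x)=x(1+x^2)^{(2\beta-3)/4}\,{}_2F_1\!\left(\tfrac{3}{4}-\tfrac{\beta}{2},\,\tfrac{3}{4}+\tfrac{\beta}{2};\,\tfrac{3}{2};\,\tfrac{x^2}{1+x^2}\right)$, in which the argument $x^2/(1+x^2)\in[0,1)$ is always in the convergent range and the series coefficients have a controlled sign, and then to show $Q'>0$ either by differentiating this representation or, equivalently, by a Sturm-type comparison on the ODE that uses the prescribed asymptotics to rule out zeros of $Q'$. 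Once both hypotheses are in place, Theorem~\ref{C} yields \eqref{chernoff-gen} with $\rho(\beta)=(\beta-1/2)^2$, completing the proof.
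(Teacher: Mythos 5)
Your proof splits naturally into two pieces that deserve separate assessments.

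For $\beta>\tfrac32$ you take $Q(x)=x$ and recover $P(x)=(1+x^2)/(2(\beta-1))$ from the stationary equation, which is exactly the paper's construction at $\alpha=1$ (the paper writes $Q(x)=2\lambda x$ with $\lambda=\beta-1$ and $P(x)=1+x^2$, the same pair up to rescaling of $Q$). This part is correct and essentially identical.

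For $\tfrac12<\beta\le\tfrac32$ you take a genuinely different and considerably harder road. The paper keeps $P$ and $Q$ elementary throughout: it chooses the one-parameter family $P(x)=(1+x^2)^\alpha$, $Q(x)=2\alpha\lambda x(1+x^2)^{\alpha-1}$ with $\alpha\in(\tfrac12,1]$ and $\alpha(1+\lambda)=\beta$, checks $Q'>0$ by a one-line computation, bounds $P/Q'\le(1+x^2)/\bigl(2\alpha\lambda(2\alpha-1)\bigr)$ using $2\alpha-1\le1$, and then maximises $2(\beta-\alpha)(2\alpha-1)$ over $\alpha$. Everything is rational, the verification of the hypotheses of Theorem~\ref{C} is trivial, and the weight $(1+x^2)/(\beta-\tfrac12)^2$ appears as a pointwise \emph{upper bound} for the true $P/Q'$ (tight only at $x=\pm\infty$). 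You instead insist on realising the target weight \emph{exactly}, $P/Q'\equiv(1+x^2)/(\beta-\tfrac12)^2$, which forces $Q$ to solve the hypergeometric ODE $(1+x^2)Q''+2(1-\beta)xQ'+(\beta-\tfrac12)^2Q=0$.

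The resulting $Q$ is correctly identified (I checked the leading coefficients and the Pfaff transformation), and the ODE analysis at infinity is essentially right, though you should note that the indicial exponent $\beta-\tfrac12$ is a \emph{double} root, so the odd solution in fact behaves like $c\,\mathrm{sgn}(x)\,|x|^{\beta-1/2}\log|x|$ rather than $c\,\mathrm{sgn}(x)\,|x|^{\beta-1/2}$; this still gives $Q(\pm\infty)=\pm\infty$, so the boundary condition is fine, but the asymptotic as stated is inaccurate. The real problem is the crucial hypothesis $Q'>0$ on all of $\R$: you explicitly flag this as ``the harder step'' and offer only a plan (differentiate the Pfaff-transformed representation, or run a Sturm comparison) without carrying it out. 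This is not a minor detail one can wave away: without global strict monotonicity of $Q$, the change of variables in the proof of Theorem~\ref{C} is not invertible and the weight $P/Q'$ is not even finite. Until that step is done, the argument for $\tfrac12<\beta\le\tfrac32$ is incomplete, and it is precisely the step that the paper's choice of elementary $Q$ makes trivial. So: correct and equivalent for $\beta>\tfrac32$; for $\beta\le\tfrac32$, a more ambitious but unfinished route, with the decisive monotonicity claim unproven.
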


\begin{proof}
We start by remarking that the Cauchy-type density $f_\beta$ defined in \fer{cau} can be characterized as the stationary state of a whole family of Fokker--Planck type equations, which depend on two positive parameters $\alpha$ and $\lambda$ related to satisfy the constraint
\be\label{a-la}
\alpha(1+\lambda)=\beta.
\ee
For our purposes, we will assume that  $\alpha \in \left( 1/2, 1\right]$ and let $\lambda >0$. This choice guarantees that  we can obtain from relation \fer{a-la} all values of $\beta >1/2$. It can be easily checked that 
this family of Fokker--Planck type equations is given by
\[
\partial_t f= \partial^2_x \left((1+x^2)^\alpha f\right ) +\lambda \partial_x\left(\frac {2\alpha x}{(1+x^2)^{1-\alpha}} f\right), \quad x\in \R, t>0.
\]
Indeed $f_\beta$ satisfies, for all $x\in\R$, the differential equation
\be\label{sta-b}
\partial_x \left((1+x^2)^\alpha f_\beta\right ) =-\frac {2\alpha \lambda x}{(1+x^2)^{1-\alpha}} f_\beta.
\ee
Equation \fer{sta-b} is of the type \fer{ste1}, with 
 \be\label{pq}
P(x) = (1+x^2)^\alpha, \quad Q(x)=\frac {2\alpha \lambda x}{(1+x^2)^{1-\alpha}}, \quad x\in\R.
\ee
In the allowed range of the constant $\alpha$, the function $Q$ satisfies all the assumptions of Theorem \ref{C}.
Indeed, the function $Q$ is differentiable on $\R$ and for all $\alpha >1/2$
\[
Q'(x)= \frac {2\alpha \lambda (1+x^2(2\alpha -1))}{(1+x^2)^{2-\alpha}} >0, \quad  x\in \R.
\]
Moreover
\[
\lim_{x\to -\infty} Q(x)=-\infty, \quad \lim_{x\to +\infty} Q(x)= +\infty.
\]
So $Q : \R \longrightarrow \R$ is a strictly monotone, smooth transformation.
Moreover, since $\alpha \leq 1$ we have $2\alpha -1 \leq1$,   so that
\be\label{bound-c}
\frac{P(x)}{Q'(x)} = \frac{(1+x^2)^2}{2\alpha \lambda (1+x^2(2\alpha-1))} \leq  \frac 1{2\alpha\lambda(2\alpha-1)} (1+x^2),\quad x\in\R.
\ee
Therefore, from inequality \fer{Ch-gen} of Theorem \ref {C} we obtain
\[
 \int_\R \phi^2(x) f_\beta(x) \, d x -\left(  \int_\R \phi(x) f_\beta(x) \, d x\right )^2 \leq \frac 1{2\alpha \lambda (2\alpha -1)}  \int_\R (1+x^2)  \left(\phi'(x)\right )^2 f_\beta(x) \, d x.
\]
We can now look for the optimal value of the constant $2\alpha \lambda (2\alpha -1))$ under the constraints  $\alpha \in \left (1/2, 1\right ]$, $\lambda >0$ and  \fer{a-la}.
Since \fer{a-la} implies $\lambda =\beta/ \alpha -1$, the optimal value is obtained by maximizing the function
\[
\rho_\beta(\alpha)= 2(\beta-\alpha)(2\alpha-1).
\]
To this end, since
\[
\rho'_\beta(\alpha)= 2(2\beta-4\alpha+1)
\]
we obtain
\[
\rho_\beta'(\alpha) \geq 0 \iff \alpha \leq \frac \beta 2+\frac 14.
\]
If $\frac 12 < \frac \beta 2 +\frac 14 \leq 1$, then $\alpha_{\max}= \frac \beta 2 +\frac 14$,  while if $ \frac \beta 2 +\frac 14>1$, then $\alpha_{\max}=1$.
Denoting by $\rho(\beta)= \max\{\rho_\beta(\alpha), \frac 12 <\alpha \leq 1\}$, we then find 
\be\label{optimal}
\rho(\beta)= 
\begin{cases}
\left(\beta-\frac 12\right)^2& \frac 12 <\beta \leq \frac 32\\
2(\beta-1) &\beta >\frac 32.
\end{cases}
\ee
This completes the proof.
\end{proof}

\begin{rem}  In the case $\beta > 3/2$ the optimal constant $\rho(\beta)$  is obtained by choosing $\alpha_{\max} =1$. In this case $Q(x) = 2\lambda x$ is therefore linear, and the proof of Chernoff inequality was already obtained in \cite{FPTT}. Moreover, in this range of the parameter $\beta$, the Cauchy-type density has finite variance and this implies that the function $\phi$ in Theorem \ref{Ch-Cauchy} can be chosen to be linear in $x$. Since we proved in \cite{FPTT} that Chernoff inequality with weight further guarantees that there is equality in \fer{chernoff-gen} if and only if $\phi(x)$ is linear in $x$, we can conclude that for $\beta >3/2$ the constant $\rho(\beta)$ is sharp.
\end{rem}

\subsection{Chernoff with weight for inverse Gamma densities}

\begin{thm} [Chernoff for inverse Gamma--type densities] \label{Ch-inv}

Let $X$ be a random variable distributed with density \fer{inv-gamma} for $x\in \R_+$, $\beta > 1/2$, $m>0$. For any smooth function $\phi$   on $\R_+$ such that the variance of $\phi(X)$ finite it holds  
\be
\label{gamma-gen}
 Var[\phi(X)] \le \frac 1{\rho(\beta)} E\left\{X^2[\phi'(X)]^2\right\},
\ee
 where
\be
\rho(\beta)= 
\begin{cases}
\left(\beta-\frac 12\right)^2 & \frac 12 <\beta \leq \frac 32\\
2(\beta-1) &\beta >\frac 32.
\end{cases}
\ee
\end{thm}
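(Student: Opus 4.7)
The approach is to mirror the proof of Theorem \ref{Ch-Cauchy}, exhibiting the inverse Gamma density $h_{\beta,m}$ as the equilibrium of a one-parameter family of Fokker--Planck equations of the form \fer{FP-gen}, applying Theorem \ref{C} to each member of the family, and then optimizing the resulting constant over the parameter.

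Specifically, for $\alpha \in (1/2, 1]$ with the tacit constraint $\alpha < \beta$ (which will hold at the optimum), I would take
$$
P(x) = x^{2\alpha}, \qquad Q(x) = 2(\beta - \alpha)\, x^{2\alpha-1} - m\, x^{2\alpha-2}, \qquad x \in (0, +\infty).
$$
Direct differentiation of $h_{\beta,m}(x) = C_{\beta,m}\, x^{-2\beta}\, e^{-m/x}$ confirms that the stationary identity \fer{ste1} is satisfied for every such $\alpha$, so we have a whole family of Fokker--Planck equations of type \fer{FP-gen} sharing $h_{\beta,m}$ as their steady state. A short calculation gives
$$
Q'(x) = x^{2\alpha-3}\left[\, 2(\beta-\alpha)(2\alpha-1)\, x + 2(1-\alpha)\, m\, \right],
$$
which is strictly positive on $\R_+$ for $\alpha \in (1/2, 1]$ and $\beta > \alpha$ (when $\alpha = 1$ the second term vanishes, but the first remains positive, since $\beta > 1$ is the only range where the endpoint $\alpha=1$ will be used). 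The endpoint behavior $Q(x) \to -\infty$ (or $\to -m$ in the limiting case $\alpha = 1$) as $x \to 0^+$ and $Q(x) \to +\infty$ as $x \to +\infty$ takes care of assumption \fer{bordi}, so Theorem \ref{C} applies.

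Reading off the weight,
$$
w(x) = \frac{P(x)}{Q'(x)} = \frac{x^3}{2(\beta-\alpha)(2\alpha-1)\, x + 2(1-\alpha)\, m} \le \frac{x^2}{2(\beta-\alpha)(2\alpha-1)},
$$
where the last inequality uses $2(1-\alpha)m \ge 0$. Substitution into \fer{Ch-gen} produces a weighted Chernoff inequality with constant $1/[2(\beta-\alpha)(2\alpha-1)]$ for every admissible $\alpha$. Optimizing the denominator $\rho_\beta(\alpha) := 2(\beta-\alpha)(2\alpha-1)$ over $\alpha \in (1/2, 1]$ is exactly the calculus maximization already performed in the proof of Theorem \ref{Ch-Cauchy}: the maximum is $(\beta - 1/2)^2$ for $1/2 < \beta \le 3/2$, attained at $\alpha_* = \beta/2 + 1/4$, and $2(\beta-1)$ for $\beta > 3/2$, attained at $\alpha_* = 1$. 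This yields \fer{gamma-gen}.

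The only place requiring real thought is the identification of the family $\{P, Q\}$: the wealth-model drift $x^\delta(x-1)$ in \fer{FPz} is \emph{not} strictly increasing on $\R_+$ (it vanishes at $x = \delta/(1+\delta)$), so the Fokker--Planck equation \fer{FPz} itself cannot be plugged directly into Theorem \ref{C}, and a different parametrization of the Fokker--Planck family keeping $h_{\beta,m}$ as its stationary state must be chosen. Once the alternative $P(x) = x^{2\alpha}$ with $\alpha \in (1/2, 1]$ is adopted, the rest of the argument is routine and structurally identical to the Cauchy-type proof.
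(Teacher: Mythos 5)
Your proof is correct and follows essentially the same route as the paper's own argument. The paper parametrizes the family via $P(x)=x^{2\alpha}$ and $Q(x)=\lambda(x-m/\lambda)x^{2\alpha-2}$ with the constraint $2\alpha+\lambda=2\beta$ (so $\lambda=2(\beta-\alpha)$), verifies $Q'(x)\ge\lambda(2\alpha-1)x^{2\alpha-2}>0$ and the sign conditions at $0^+$ and $+\infty$, bounds $P/Q'\le x^2/[\lambda(2\alpha-1)]$, and then maximizes $(2\beta-2\alpha)(2\alpha-1)$ over $\alpha\in(1/2,1]$ exactly as in the Cauchy case; after substituting $\lambda=2(\beta-\alpha)$, this is identical to your $P$, $Q$, $Q'$, weight bound, and optimization.
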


\begin{proof}
The proof follows along the same lines of Theorem \ref{Ch-Cauchy}. Indeed, $h_{\beta,m}$ is the stationary state of a whole family of Fokker--Planck type equations, which depend on two positive parameters ${\alpha}$ and $\lambda$, where 
\be\label{a-l}
{2{\alpha}}+\lambda=2\beta.
\ee
For our purposes, we will take ${\alpha} \in \left( 1/2, 1\right]$ and  $\lambda >0$. Consequently, the exponent $2\beta $ of $x$ in the inverse Gamma density is greater than one.
The family of Fokker--Planck type equations having $h_{\beta,m}$ as stationary state is defined by
\be\label{FP-inv}
\partial_t h= \partial^2_x \left(x^{2{\alpha}} h\right ) +\lambda \partial_x\left(\left(x-\frac m\lambda\right ) x^{{2{\alpha}} -2} h\right), \quad x\in \R_+, t>0.
\ee
Thus, $h_{\beta,m}$ satisfies, for all $x\in\R_+$
\be
\label{sta-c}
\partial_x \left(x^{2{\alpha}} h_{\beta,m}\right ) =-\lambda \left(x-\frac m\lambda\right ) x^{{2{\alpha}} -2} h_{\beta,m}.
\ee
Equation \fer{sta-c} is of the type \fer{ste1}, with 
 \be\label{PQ}
P(x) = x^{2\alpha} ; \quad Q(x)= \lambda \left(x-\frac m\lambda\right ) x^{{2{\alpha}} -2}, \quad x\in\R_+
\ee
In the allowed range of the constant $\alpha$, the function $Q$ satisfies all the assumptions of Theorem \ref{C}.
The function $Q$ is differentiable on $\R_+$ and for  $1/2<{\alpha}\leq1$
\[
Q'(x)=\lambda({2{\alpha}}-1)x^{{2{\alpha}}-2}-m({2{\alpha}}-2)x^{{2{\alpha}}-3} \geq \lambda({2{\alpha}}-1)x^{{2{\alpha}}-2}>0, \quad  x\in \R_+.
\]
Moreover, for ${\alpha}< 1$
\[
\lim_{x\to 0^+} Q(x)=-\infty, \quad \lim_{x\to +\infty} Q(x)= +\infty.
\]
When  ${\alpha}=1$ the function $Q(x)$ is defined also for $x=0$ and $Q(0)=-m<0$. 
So $Q : \R_+ \longrightarrow \R$ is a strictly monotone, smooth transformation.
Moreover
\be\label{bound-ig}
\frac{P(x)}{Q'(x)} \leq \frac{x^{2{\alpha}}}{\lambda({2{\alpha}}-1) x^{{2{\alpha}}-2}}= \frac {x^2}{\lambda({2{\alpha}}-1) },\quad x\in\R_+.
\ee
We apply Theorem \ref {C} and  for all ${\alpha} \in \left( 1/2,1\right]$ and $\lambda >0$ satisfying \fer{a-l} we obtain
\[
 \int_{\R_+} \phi^2(x) h_{\beta,m}(x) \, d x -\left(  \int_{\R_+} \phi(x) h_{\beta,m}(x) \, d x\right )^2 \leq \frac 1{\lambda ({2{\alpha}} -1)}  \int_{\R^*} x^2  \left(\phi'(x)\right )^2 h_{\beta,m}(x) \, d x.
\]
Thanks to \fer{a-l}, we can substitute the value $\lambda = 2\beta- {2{\alpha}} $ in the constant  ${\lambda({2{\alpha}}-1) } $. This leads to maximize the constant 
\[
\rho_\beta({\alpha})= (2\beta-{2{\alpha}})({2{\alpha}}-1),
\]
with respect to $\alpha$. To this end, since
\[
\rho'_\beta({\alpha})= 2\left(2\beta-{4{\alpha}}+1\right ),
\]
we obtain
\[
\rho_\beta'({\alpha}) \geq 0 \iff {\alpha} \leq \frac {2\beta+1}4.
\]
So, if $\frac 12 < \frac {2\beta +1}4 \leq 1$, then ${\alpha}_{\max}=  \frac {2\beta +1}4$,  and if $ \frac {2\beta +1}4>1$, then ${\alpha}_{\max}=1$.
Denoting by $\rho(\beta)= \max\{\rho_\beta({\alpha}), \frac12 <{\alpha} \leq 1\}$, we obtain 
\[
\rho(\beta)= 
\begin{cases}
\left(\beta-\frac 12\right)^2 & \frac 12 <\beta \leq \frac 32\\
2(\beta-1) &\beta >\frac 32
\end{cases}
\]
and this completes the proof.
\end{proof}

\begin{rem} In the case $\beta > 3/2$ the optimal constant $\rho(\beta)$  is obtained by choosing $\alpha_{\max} =1$. In this case $Q(x) = \lambda x-m$ is therefore linear, and the proof of Chernoff inequality was already obtained in \cite{FPTT}. Moreover, in this range of the parameter $\beta$, the Cauchy-type density has finite variance and this implies that the function $\phi$ in Theorem \ref{Ch-inv} can be chosen to be linear in $x$. Since we proved in \cite{FPTT} that Chernoff inequality with weight further guarantees that there is equality in \fer{gamma-gen} if and only if $\phi(x)$ is linear in $x$, we can conclude that for $\beta >3/2$ the constant $\rho(\beta)$ is sharp.
\end{rem}

We can now rewrite the Chernoff inequality \fer{gamma-gen} in terms of the standard notation of the inverse Gamma functions with parameters  $\kappa>0$ and $m>0$, that is
 \be\label{IG}
 f_{\kappa,m} = \frac{m^\kappa}{\Gamma(\kappa)}\frac 1{x^{1+\kappa}} e^{-m/x}, \quad x \in \R_+.
 \ee
We then obtain
 \be\label{cher-gam}
 \int_{\R_+} \phi^2(x) f_{\kappa,m}(x) \, d x -\left(  \int_{\R_+} \phi(x)  f_{\kappa,m}(x) \, d x\right )^2 \leq \frac 1{\gamma(\kappa)}  \int_{\R_+} x^2  \left(\phi'(x)\right )^2  f_{\kappa,m}(x) \, d x.
\ee
In \fer{cher-gam} the optimal constants $\gamma(\kappa)$ are given by
\[
\gamma(\kappa)= 
\begin{cases}
{\kappa^2}/4 & 0< \kappa \le 2\\
\kappa -1  &\kappa >2.
\end{cases}
\]
It is immediate to check that inequality \fer{cher-gam}, for $\kappa >2$, reduces to equality when $\phi(x)$ is linear in $x$.

\section{Logarithmic Sobolev inequalities for heavy tailed densities}\label{sec:sobolev}

In this section we will apply the relationship between Cauchy-type densities and Fokker--Planck equations to obtain weighted logarithmic Sobolev inequalities in the form \fer{PoiS}. Similarly to the analysis of Section \ref{sec:chernoff}, we will refer to a suitable class of Fokker--Planck type equations \fer{FP-gen},  well adapted to the derivation of the result. Let 
\[
f_\beta(x)= \frac {C_\beta}{(1+|x|^2)^\beta}, \quad x\in \R^n
\]
denote a Cauchy--type probability density in $\R^n$, $n\geq 1$, where  
 $\beta >n/2$. 
It is known, after Bobkov and Ledoux \cite{BL}, that these densities satisfy a weighted Log-Sobolev inequality in the range  $\beta \geq \frac {n+1}2$ if $n>1$, and $\beta >1$ if $n=1$. For any probability density $f\in L_1(\mathbb{R}^n) $, absolutely continuous with respect to $f_\beta$, the inequality in \cite{BL},   can be written in the physically relevant form
\be\label{BL}
\int_{\mathbb{R}^n} f\log \frac {f }{f_\beta} dx\leq
\frac{1}{\beta -1}\int_{\mathbb{R}^n}(1+|x|^2)^2 \left|\nabla \sqrt {\frac f{f_\beta}}\right|^2\,  f_\beta\, dx.
\ee
In fact, by letting 
\[
\phi(x)= \sqrt{\frac{f(x)}{f_\beta(x)}}, \quad x\in\R^n,
\]
that implies 
 \[
 \int_{\R^n} \phi(x)^2f_\beta(x) \, dx = 1,
 \]
one easily recognizes that \eqref {BL} is equivalent to \eqref{PoiS} with weight $w(x) = (1+x^2)^2$.
Moreover, since 
\[
4  \left|\nabla \sqrt {\frac f{f_\beta}}\right|^2\,  f_\beta =  \left|\nabla\log{\frac f{f_\beta}}\right|^2\,  f
\]
inequality \eqref{BL} can be rewritten in the equivalent form
\be\label{equiv}
\int_{\mathbb{R}^n} f\log \frac {f }{f_\beta} dx\leq
\frac{1}{4(\beta -1)}\int_{\mathbb{R}^n}(1+|x|^2)^2  \left|\nabla\log{\frac f{f_\beta}}\right|^2\,  f\, dx.
\ee
In Bobkov and Ledoux \cite{BL} the weight function in inequality \fer{BL} does not depend on the value of the parameter $\beta$ characterizing the Cauchy--type density. In what follows, we are going to show that, in dimension $n=1$ the weight function in inequality \fer{BL} can be improved. 

\subsection{A sharp logarithmic Sobolev inequalities for Cauchy--type densities}

The main result of this Section is the following.

\begin{thm}[Log--Sobolev for Cauchy--type densities] \label{LS-cauchy}
Let $X$ be a random variable distributed with  the Cauchy-type probability density \fer{cau}, with $\beta >1$.  For any { bounded} smooth function $\phi(x)$, with $x\in\R$, such that  $\phi(X)$ has finite entropy, and for all  $1<\alpha< \beta$  one has the bound
\begin{equation}\label{Log_peso}
 Ent\left[\phi^2(X)\right] \le \frac{2}{\rho_{\beta, \alpha} }E\left\{(1+X^2)^\alpha [\phi'(X)]^2\right\}.
\end{equation}
 In \fer{Log_peso} the constant $\rho_{\beta, \alpha}$ is given by
\be\label{log-c}
\rho_{\beta, \alpha}= \begin{cases}
\left(2\beta-\alpha\right ) \left( \frac {\alpha-1}{2-\alpha}\right )^{3-2\alpha} & 1<\alpha <\frac 32\\
2\beta -\alpha &\frac 32 \leq \alpha<\beta, \quad \beta > \frac 32.
\end{cases}
\ee
 \end{thm}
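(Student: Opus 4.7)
The plan is to lift the Fokker--Planck strategy of Theorem \ref{Ch-Cauchy} from the variance to the relative entropy. I would work again with the family of Fokker--Planck equations having $f_\beta$ as stationary state, with coefficients $P(x)=(1+x^2)^\alpha$ and $Q(x)=2\alpha\lambda x(1+x^2)^{\alpha-1}$ tied by $\alpha(1+\lambda)=\beta$, but now exploit the wider range $\alpha\in(1,\beta)$ (in which the drift $Q$ is superlinear). Rewriting the equation in the relative divergence form $\partial_t f=\partial_x\bigl(Pf_\beta\partial_x(f/f_\beta)\bigr)$ and running the flow from $f_0=\phi^2 f_\beta$ (after the harmless normalisation $\int\phi^2 f_\beta=1$), a standard integration by parts yields $\frac{d}{dt}H(f|f_\beta)=-I(f|f_\beta)$, where at $t=0$ these quantities coincide with $Ent[\phi^2]$ and $4\int P(\phi')^2 f_\beta\,dx$ respectively. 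The target inequality \fer{Log_peso} is therefore equivalent to $H(f_0|f_\beta)\leq I(f_0|f_\beta)/(2\rho_{\beta,\alpha})$.

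The key step is the \emph{exponential decay of Fisher information}, $\frac{d}{dt}I(f|f_\beta)\leq -2\rho_{\beta,\alpha}\,I(f|f_\beta)$, which upon integration in time (using $H(f(t))\to 0$) delivers the claimed bound. By the classical $\Gamma_2$-computation for the generator $L\phi=P\phi''-Q\phi'$, with $\Gamma(\phi)=P(\phi')^2$, this monotonicity is equivalent to the pointwise Bakry--Emery condition $\Gamma_2(\phi)\geq\rho_{\beta,\alpha}\Gamma(\phi)$. A direct expansion gives
\[
2\Gamma_2(\phi)=2P^2(\phi'')^2+2PP'\,\phi'\phi''+\bigl[PP''+2PQ'-QP'\bigr](\phi')^2,
\]
and viewing $\Gamma_2-\rho_{\beta,\alpha}\Gamma\geq 0$ as a quadratic form in $\phi''$ (with $\phi'$ a parameter) reduces nonnegativity, via the discriminant, to the scalar bound
\[
2\rho_{\beta,\alpha}\leq P''(x)+2Q'(x)-\frac{Q(x)P'(x)}{P(x)}-\frac{(P'(x))^2}{2P(x)},\qquad x\in\R.
\]

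Substituting the explicit $P$ and $Q$ and using the constraint in the form $\alpha(1+2\lambda)=2\beta-\alpha$, the right-hand side collapses to $2(2\beta-\alpha)(1+x^2)^{\alpha-2}\bigl[1+(\alpha-1)x^2\bigr]$, so the optimal constant is $\rho_{\beta,\alpha}=(2\beta-\alpha)\min_{x\in\R}F(x)$ with $F(x)=(1+x^2)^{\alpha-2}[1+(\alpha-1)x^2]$. Elementary calculus in $u=x^2$ shows that for $\alpha\geq 3/2$ the function $F$ is non-decreasing on $\R_+$ and attains its minimum $F(0)=1$ at the origin, recovering $\rho_{\beta,\alpha}=2\beta-\alpha$; for $1<\alpha<3/2$ the interior critical point $u=(3-2\alpha)/(\alpha-1)^2$ produces the minimum $F=[(\alpha-1)/(2-\alpha)]^{3-2\alpha}$, yielding the second branch of \fer{log-c}. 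The main obstacle I expect is the rigorous justification of the entropy/Fisher-production identities in this heavy-tailed setting: one must ensure that the boundary terms at $\pm\infty$ vanish in every integration by parts, control the regularity/integrability of the flow starting from $\phi^2 f_\beta$ well enough for the $\Gamma_2$-computation to apply, and close the argument by a density/truncation procedure reducing to smooth bounded test functions for which all the formal manipulations above are unambiguous.
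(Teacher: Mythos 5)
Your proposal is essentially the same approach as the paper, since both ultimately rest on the Bakry--\'Emery criterion and arrive at the identical scalar inequality. The only presentational difference is that you run the $\Gamma_2$ computation directly in the original $x$-variable for the generator $L\phi=P\phi''-Q\phi'$, whereas the paper first performs Feller's change of variables $dy/dx=(1+x^2)^{-\alpha/2}$ to turn the generator into the unit-diffusion form $\partial_y^2-W'(y)\partial_y$ on the interval $[-a,a]$ and then checks $W''(y)\ge\rho$; these are the same condition, and indeed your discriminant inequality $2\rho\le P''+2Q'-QP'/P-(P')^2/(2P)$ evaluates to exactly $2W''(y(x))=2\,\alpha(1+2\lambda)(1+x^2)^{\alpha-2}\bigl[1+(\alpha-1)x^2\bigr]$, so the function you minimize is literally the paper's $W''$.

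One remark: the caveats you flag at the end (boundary terms, regularity, density) are precisely the issues the paper sidesteps by mapping to the compact interval $\mathbb{M}=[-a,a]$ and invoking Bakry--\'Emery there as a black box; working directly on $\R$ as you do requires a version of the $CD(\rho,\infty)\Rightarrow$LSI implication that applies to a generator with unbounded, degenerate-at-infinity coefficients, so this step deserves to be cited carefully rather than just asserted.
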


\begin{rem} Before entering into the technical details of the proof, let us compare inequality \fer{Log_peso} with the analogous one proven  by Bobkov and Ledoux, as given by \fer{BL}. First of all, since the exponent $\alpha>1 $ of the weight function is only subject to the constraint to be less than $\beta>1$, for any value of $\beta$ we can always choose $\alpha < 2$ to satisfy the inequality. Hence we have a smaller weight, which, however, for values of $\alpha$ close to one has a worse constant $\rho_{\beta, \alpha}$. In any case, the weight $w(x)=1+x^2$ can not be reached, since $\rho_{\beta, \alpha}\to 0$ as $\alpha \to 1$. The best result  is obtained in the interval  $3/2 \le \alpha \leq 2$, since the constant  $\rho_{\beta,\alpha}> 2(\beta -1) $ and at the same time the weight function in \fer{Log_peso} is smaller than the one in \fer{BL}. 
Last, when $\beta >2$, by setting $\alpha =2$ we recover exactly  the result by Bobkov and Ledoux. \end{rem}

\begin{proof}
We proceed by proving an equivalent inequality of type \fer{equiv}, { for a smooth probability density   $f$, absolutely continuous with respect to  $f_\beta$. Then, for any bounded, smooth function $\phi$, we will consider
  $f(x)= \frac {f_\beta(x) \phi^2(x)} {\int_\R f_\beta(x) \phi^2(x)\, dx}$ and we will recover inequality \eqref{Log_peso} in the general form.}
  As in the proof of Chernoff inequality  in Theorem \ref{Ch-Cauchy}, we observe that $f_\beta$ is a stationary state of  the family of Fokker--Planck type equations
\be\label{Eqf}
\partial_t f= \partial^2_x \left((1+x^2)^\alpha f\right ) +\lambda \partial_x\left( {2\alpha x}(1+x^2)^{\alpha-1} f\right), \quad x\in \R,\, t>0.
\ee
Unlike the proof of Theorem \ref{Ch-Cauchy}, we assume now the conditions $\alpha >1$ and $\lambda >0$, still subject to the constraint $\alpha(1+\lambda)=\beta$.  This choice is coherent with the lower bound $\beta >1$ in the statement of the theorem.
In order to proceed, we make use of an equivalent formulation of the Fokker Planck equation in terms of the function
$F= \frac { f }{f_{\beta}}$. Skipping details, that can be found in \cite{FPTT},  one shows
 that $F$ satisfies the evolution equation
\begin{equation}\label{EqF}
 \frac{\partial F}{\partial t}
 =
 (1+x^2)^{\alpha} \frac{\partial^2  F}{\partial x^2} - 2\alpha \lambda x (1+x^2)^{\alpha-1} \frac{\partial F}{\partial x}.
\end{equation}

Following the original argument of Feller \cite{Fe52}, we introduce a change of variables to make the diffusion coefficient equal to unity.
To this end, let us define
\be\label{FG}
G(y,t)=F(x,t), 
\ee
with
\begin{equation}\label{cambio}
 \frac{dy}{dx}=\frac{1}{(1+x^2)^{{\alpha}/{2}}}.
 \end{equation}
Owing to \fer{cambio} we obtain 
$$
\frac{\partial F}{\partial x}=  \frac{1}{(1+x^2)^{{\alpha}/{2}}} \frac{\partial G}{\partial y}
$$
and
$$
\frac{\partial^2 F}{\partial x^2}= \frac{1}{(1+x^2)^{{\alpha}}} \frac{\partial^2 G}{\partial y^2} - \frac{\alpha x}{(1+x^2)^{{\alpha}/2 +1}} \frac{\partial G}{\partial y}.
$$
Therefore the right hand side of \eqref{EqF} becomes
$$
\frac{\partial^2 G}{\partial y^2}
- 
\alpha x(1+x^2)^{\frac \alpha 2-1} \frac{\partial G}{\partial y}
 -
2\alpha \lambda x (1+x^2)^{\frac \alpha 2-1}
 \frac{\partial G}{\partial y}.
$$
We denote by $x=x(y)$  the inverse  of the increasing function $y(x),$ defined by (\ref {cambio}).
Hence equation (\ref{EqF}) turns into a Fokker Planck equation with coefficient of diffusion equal to one
\begin{equation}\label{EqG}
 \frac{\partial G}{\partial t}=\frac{\partial^2 G}{\partial y^2}
- W'(y) \frac{\partial G}{\partial y},
 \end{equation}
where $W'(y)$ is the drift term 
  \begin{equation}\label{Wprimo}
  W^{\prime}(y)=
\alpha (1+2\lambda)x(y)
 (1+x^2(y))^{\frac \alpha 2 -1}.
\end{equation}
Equation (\ref{EqG}) is the adjoint of the Fokker--Planck equation
\begin{equation}\label{Eqg}
\frac{\partial g}{\partial t}=
\frac{\partial^2 g}{\partial y^2} +
\frac{\partial }{\partial y}(  W^{\prime}(y)
g)
\end{equation}
still with diffusion coefficient  equal to one, and steady state 
 \be\label{www}
g_{\infty}(y)= C e^{-W(y)}.
\ee
As shown in \cite{To20},  it is useful to introduce a further version of the Fokker--Planck equation \fer{EqF}, that highlights an interesting feature of the change of variables \fer{cambio}. 
 For given $t >0$, let $X(t)$ denote the random process with probability density $f(x,t)$, solution of the Fokker--Planck equation \fer{Eqf}, and let
 \be\label{dist}
 \MF(x,t) = P(X(t) \le x)=  \int_{-\infty}^x f(y,t)\, dy
 \ee
denote its probability distribution. 
Integrating both sides of equation \fer{Eqf} on  $(-\infty,x)$,  it follows by simple computations that $\MF(x,t)$ satisfies the equation 
\begin{equation}\label{EqF1}
 \frac{\partial \MF}{\partial t}
 =
 (1+x^2)^{\alpha} \frac{\partial^2  \MF}{\partial x^2} +2\alpha(1+ \lambda) x (1+x^2)^{\alpha-1} \frac{\partial \MF}{\partial x}.
\end{equation}
As before, let us define
\be\label{iden}
\MG(y,t)=\MF(x,t), 
\ee
where $y=y(x)$ is defined through \fer{cambio}. Then, using the same computations leading from \fer{EqF} to \fer{EqG} it is immediate to show that $\MG$ satisfies 
\begin{equation}\label{EqG1}
 \frac{\partial \MG}{\partial t}=\frac{\partial^2 \MG}{\partial y^2}
+W'(y) \frac{\partial \MG}{\partial y}.
 \end{equation}
Hence, if for given $t >0$,  $Y(t)$ denotes the random process with probability density $g(x,t)$, solution of the Fokker--Planck equation \fer{Eqg},  $\MG(y,t)$ is the distribution function of the process $Y(t)$. This relation implies an explicit connection between the solutions to the equations \fer{Eqf} and \fer{Eqg}. Indeed, differentiating the identity \fer{iden}, one obtains for all $t \ge 0$
 \be\label{ide}
g(y(x),t)= f(x, t) (1+x^2)^{\frac{\alpha}{2}},
 \ee
and
\be\label{iss}
g_{\infty}(y(x))= f_{\beta}(x) (1+x^2)^{\frac{\alpha}{2}}. 
\ee
The properties of the steady state $g_\infty(y)$ can be  easily deduced from \fer{iss}.  Recalling that $\alpha >1$,
 the change of variable \eqref{cambio} implies 
\[
y(x)=\int_0^x \frac{1}{(1+t^2)^{\frac\alpha 2}}\, dt, 
\]
and since the integral function belongs to $L_1(\R)$, then  $\lim_{x\to \pm \infty}y(x)= \pm a(\alpha) $. Thus, $y(x)$ is contained in the strip  $\mathbb{M}=[-a,a]$.
We are now  ready to prove  inequality (\ref{Log_peso}). Actually, Fokker--Planck equations  of type (\ref{Eqg})  have been introduced as a useful working tool to get logarithmic Sobolev inequalities for probability densities  different from the standard Gaussian \cite{OV}. The argument follows from Bakry and Emery theorem \cite{BE}, which can be immediately applied thanks to the particular form of (\ref{Eqg}).
More precisely, given the equilibrium density $g_{\infty}=C e^{-W(y)}$ defined on a complete manifold  $\mathbb{M}=[-a,a]\subset \mathbb{R}$,  Bakry and Emery criterion guarantees that for all smooth probability densities $g$ on $\mathbb M$ absolutely continuous  with respect $g_{\infty}$,
it holds
\begin{equation}\label{Bak}
\int_{\mathbb{M}} g(y)\log \frac {g(y) }{g_{\infty}(y)} dy\leq
\frac{1}{2\rho}\int_{\mathbb{M}}\left(\frac{d}{dy}
\log\frac {g(y)}{g_\infty(y)}\right)^2g(y) dy,
\end{equation}
provided that the function $W$ is strongly convex, with
\begin{equation}\label{Wsecondo}
W^{\prime\prime}(y) \geq \rho>0.
\end{equation}
In our case
\[
W(y)= \int_0^y \alpha (1+2\lambda)x(s)
 (1+x^2(s))^{\frac \alpha 2 -1}\, ds.
 \]
Resorting to condition (\ref{cambio}) we easily obtain
\[
W^{\prime\prime}(y) =
\alpha (1+2\lambda) 
\frac{1+(\alpha-1) x^2(y)}{\left( 1+x^2(y) \right)^{2-\alpha}}
\]
The even function 
 \[
z(x)=\frac{1+(\alpha-1) x^2(y)}{\left( 1+x^2(y) \right)^{2-\alpha}}.
 \]
attains its minimum in the point $\bar x=0$ if $\alpha \geq \frac{3}{2}$, and in the point $\bar x=
\frac{(3-2\alpha)^{\frac{1}{2}}}{\alpha-1},$ if $1<\alpha < \frac{3}{2}$. Consequently
\[
W^{\prime\prime}(y) \geq \alpha(1+2\lambda),
\quad \alpha > \frac{3}{2}
\]
whereas
\[
W^{\prime\prime}(y) \geq  \alpha(1+2\lambda) z(\bar x)=  \alpha(1+2\lambda)\left(
\frac{\alpha-1}{2-\alpha}\right )^{3-2\alpha}, \quad 1<\alpha \leq \frac32.
\]
Let us notice that, as $\alpha \to 1$,  the convexity condition is lost.

Finally, for $\alpha >1$, and for any smooth probability density function $g$ absolutely continuous with respect to $g_\infty$, we get the logarithmic Sobolev inequality
\begin{equation}\label{Bak1}
\int_{\mathbb{M}} g(y)\log \frac {g(y)}{g_{\infty}(y)} dy\leq
\frac{1}{2\rho}\int_{\mathbb{M}}\left(\frac{d}{dy}
\log\frac {g(y)}{g_{\infty}(y)}\right)^2 g(y) dy
\end{equation}
with
$\rho= \alpha(1+2\lambda)z(\bar x)=\rho_{\alpha,\lambda}.$

The last step relies in rewriting inequality \fer{Bak1} in terms of the original Cauchy density $f_{\beta}$. 
 This can be obtained easily by resorting again  to the change of variables \eqref{cambio}. In view of \fer{FG} and  \fer{ide}   the integral on the left-hand side of
(\ref{Bak1}) becomes
\[
\int_{-\infty}^{+\infty}g(y(x))\left( \log \frac{g(y(x))}{g_{\infty}(y(x))}\right)
\frac{1}{(1+x^2)^{\frac{\alpha}{2}}}\,
dx = \int_{\mathbb{R}} f(x) \log \frac{f(x)}{f_{\beta}(x)} \, dx.
\]
Likewise, the integral on the right-hand side of \fer{Bak1}  becomes
\begin{equations}\nonumber
&\int_{-\infty}^{+\infty}
\left(
\frac{d}{dx}
\log\frac{g(y(x))}{g_{\infty}(y(x))}\right)^2 
(1+x^2)^{\alpha}g(y(x))
\frac{1}{(1+x^2)^{\frac{\alpha}{2}}}dx \\ & \int_{\mathbb{R}}(1+x^2)^{\alpha}\left(\frac{d}{dx}
\log\frac{ f(x)}{f_\beta(x)}\right)^2 \, f(x) \, dx.
\end{equations}
Finally, inequality \fer{Bak1}, written in terms of $f$ and $f_\beta$ reads
\[
\int_{\mathbb{R}} f(x)\log \frac{ f(x)}{f_\beta(x)} \, dx\leq
\frac{1}{2\rho_{\alpha, \lambda} }\int_{\mathbb{R}}(1+x^2)^{\alpha}\left(\frac{d}{dx}
\log\frac{ f(x)}{f_\beta(x)}\right)^2 \, f(x)\, dx.
\]
Resorting to the relation
\[
\beta= \alpha(1+\lambda)
\]
we replace $\lambda= \frac \beta \alpha -1$ and we get $\rho_{\alpha, \lambda}= \rho_{\beta, \alpha}$
with
\[
\rho_{\beta, \alpha}= \begin{cases}
\left(2\beta-\alpha\right ) \left( \frac {\alpha-1}{2-\alpha}\right )^{3-2\alpha} & 1<\alpha <\frac 32\\
2\beta -\alpha &\frac 32 \leq \alpha<\beta.
\end{cases}
\]
This concludes the proof.
\end{proof}
\subsection{Weighted logarithmic Sobolev inequalities for inverse Gamma densities}

As discussed in the Introduction, sharp  logarithmic Sobolev inequalities for inverse--Gamma type densities are  directly connected to the study of convergence to equilibrium for Fokker--Planck type equations like \fer{FPz}, of interest in the study of wealth distribution in a western society.  The  result that follows is contained in the paper \cite{FPTT20} and it is here reported with few details to make it possible to compare it with the result for the Cauchy--type densities obtained in the previous Section. Like in Section \ref{sec:chernoff}, we use expression \fer{inv-gamma} that allows for a direct comparison with the result of Theorem \ref{LS-cauchy}.

\begin{thm} [Log--Sobolev for inverse Gamma--type densities] \label{LS-inv}

Let $X$ be a random variable distributed with  the inverse Gamma probability density $h_{\beta,m}(x)$ defined by \fer{inv-gamma}, with $\beta >1$, $m>0$.  For any { bounded}  smooth function $\phi(x)$, with $x\in\R_+$ such that  $\phi(X)$ has finite entropy, and for all  for all $1<\alpha\leq\frac32$ and $\alpha<\beta$   one has the bound
\begin{equation}\label{Peso-g}
 Ent\left[\phi^2(X)\right] \le \frac{2}{\rho_{\beta, \alpha,m} }E\left\{X^{2\alpha} [\phi'(X)]^2\right\}.
\end{equation}
In inequality  \fer{Peso-g}  $ \rho_{\beta, \alpha, m}$ is given by
\be
 \rho_{\beta, \alpha, m}= \begin{cases} 
 \frac 12 \left( \frac{2\beta-\alpha}{\frac 32 -\alpha}\right )^{3-2\alpha} (m(2-\alpha))^{2\alpha-2} (\alpha-1)^{5-4\alpha} & 1<\alpha <\frac 32\\
\frac m2 & \alpha=\frac 32, \quad \beta > \frac 32.
\end{cases}
\ee
 \end{thm}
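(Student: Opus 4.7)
The plan mirrors the proof of Theorem \ref{LS-cauchy}, with the Cauchy Fokker--Planck family replaced by the inverse Gamma one appearing in Theorem \ref{Ch-inv}. Given $\alpha\in(1,3/2]$ with $\alpha<\beta$, I would set $\lambda:=2\beta-2\alpha>0$, so that \fer{a-l} holds and $h_{\beta,m}$ is the steady state of equation \fer{FP-inv}. Setting $F=h/h_{\beta,m}$ and using the stationarity identity \fer{sta-c}, a direct computation analogous to the one leading to \fer{EqF} yields
\[
\partial_t F = x^{2\alpha}\partial_x^2 F - \lambda\left(x-\tfrac{m}{\lambda}\right)x^{2\alpha-2}\,\partial_x F, \quad x\in\R_+, \ t>0.
\]

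Following Feller, I would then normalize the diffusion coefficient to one via the change of variable $dy/dx=x^{-\alpha}$, which for $\alpha>1$ sends $\R_+$ diffeomorphically onto a half-line. Writing $G(y,t)=F(x(y),t)$, the equation for $G$ takes the canonical form $\partial_t G=\partial_y^2 G - W'(y)\partial_y G$ with drift
\[
W'(y)= (\alpha+\lambda)\,x(y)^{\alpha-1} - m\,x(y)^{\alpha-2}.
\]
Differentiating once more using $dx/dy=x^\alpha$, one obtains
\[
W''(y)=(\alpha+\lambda)(\alpha-1)\,x^{2\alpha-2} + m(2-\alpha)\,x^{2\alpha-3},
\]
which is strictly positive on $\R_+$ for $\alpha\in(1,3/2]$.

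The critical step is to sharpen this into a uniform lower bound $W''\ge\rho>0$. For $1<\alpha<3/2$ I would minimize $z(x)=ax^{2\alpha-2}+bx^{2\alpha-3}$ with $a=(\alpha+\lambda)(\alpha-1)$ and $b=m(2-\alpha)$. The unique positive critical point is $\bar x=b(3-2\alpha)/[2a(\alpha-1)]$, and substituting back, using $\alpha+\lambda=2\beta-\alpha$, the exponents rearrange to give exactly the claimed value of $\rho_{\beta,\alpha,m}$. For $\alpha=3/2$ the power $2\alpha-3$ vanishes, so $z(x)=(\alpha+\lambda)x/2+m/2$ is increasing with $\inf_{x>0}z(x)=m/2$, again matching the statement.

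The Bakry--Emery criterion then yields
\[
\int g \log\frac{g}{g_\infty}\,dy \le \frac{1}{2\rho_{\beta,\alpha,m}}\int \left(\frac{d}{dy}\log\frac{g}{g_\infty}\right)^{2} g\,dy,
\]
which I would translate back to $\R_+$ via the identities $g(y(x))=h(x)x^\alpha$ and $g_\infty(y(x))=h_{\beta,m}(x)x^\alpha$ (the inverse Gamma analogues of \fer{ide} and \fer{iss}), producing the weight $x^{2\alpha}$ on the right--hand side; choosing $h=h_{\beta,m}\phi^2/\int_{\R_+} h_{\beta,m}\phi^2\,dx$ and expanding as in Theorem \ref{LS-cauchy} then converts the integral form into \fer{Peso-g}. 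The main obstacle is the explicit minimization and algebraic simplification for $1<\alpha<3/2$: it requires careful bookkeeping of several fractional exponents in order to recognize $(2\beta-\alpha)/(3/2-\alpha)$ raised to the power $3-2\alpha$ in $\rho_{\beta,\alpha,m}$.
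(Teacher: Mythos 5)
Your proposal is correct and follows essentially the same route as the paper: transfer the Fokker--Planck equation for $H=h/h_{\beta,m}$ to a unit-diffusion form via the Feller-type change of variable $dy/dx=\pm x^{-\alpha}$, identify the convexity modulus of the transformed potential (you compute $W''$ directly in the $x$ variable, which gives $(\alpha+\lambda)(\alpha-1)x^{2\alpha-2}+m(2-\alpha)x^{2\alpha-3}$, algebraically equivalent to the paper's expression for $U''$ in the $y$ variable), minimize it over $\R_+$ to get $\rho_{\beta,\alpha,m}$, apply Bakry--\'Emery, and translate back. The only cosmetic difference is your choice of sign in the change of variable, which sends $\R_+$ to a negative half-line instead of $\R_+$; the paper's choice $dy/dx=-x^{-\alpha}$ lands $y$ in $\R_+$ and identifies the steady state of the transformed equation as a generalized Gamma density, a point of independent interest that you omit, but this plays no role in the bound.
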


  \begin{proof}
We proceed as in Theorem \ref{LS-cauchy}, by proving the equivalent inequality
 \be
\label{Peso-g2}
\int_{\mathbb{R_+}} h(x) \log \frac{h(x)}{h_{\beta,m}(x)} \, dx\leq
\frac{1}{2 \rho_{\beta, \alpha, m}}\int_{\mathbb{R_+}}x^{2\alpha}\left(\frac{d}{dx}
\log\frac{ h(x)} {h_{\beta,m}(x)}\right)^2 \, h(x) \, dx
\ee
{ for any probability density $h$, smooth and absolutely continuous with respect to $h_{\beta,m}$.}
 As shown in Theorem \ref{Ch-inv}, $h_{\beta,m}$ is the stationary state of the family of Fokker--Planck type equations \fer{FP-inv},   depending on the  two positive parameters ${\alpha}$ and $\lambda$ satisfying the constraint \fer{a-l}. Unlike Theorem \ref{Ch-inv},  we assume now ${\alpha} \in \left(1,\frac 32\right]$ and $\lambda >0$ so that we can treat all  $\beta >1$.
In terms of the function
$H= \frac { h}{h_{\beta,m}}$, \fer{FP-inv} reads
\be\label{eqH}
\partial_t H=  x^{2{\alpha}}\partial^2_x H - \lambda \left(x-\frac m\lambda\right ) x^{{2{\alpha}} -2} \partial_x H, \quad x\in \R_+, t>0.
\ee
As in Theorem \ref{LS-cauchy}  we  change variable  to transform the Fokker--Planck type equation \fer{eqH} into a new one with  coefficient of diffusion equal to one. This is done by setting
$$
L(y,t)=H(x,t), 
$$
with
\begin{equation}\label{cambio2}
 \frac{dy}{dx}=-\frac 1{x^\alpha}, \quad x\in \R_+.
 \end{equation}
In terms of $L$, the right-hand side of \eqref{eqH} becomes
$$
\frac{\partial^2 L}{\partial y^2}
- \left(mx^{\alpha-2} -(\alpha+\lambda) x^{\alpha-1}\right ) \frac{\partial L}{\partial y}
$$
where $x=x(y)$ is the inverse  of the decreasing function $y(x),$ defined by (\ref {cambio2}).
In this case the function $y(x)$ can be computed explicitly to give
\be\label{cambio-var}
y(x)=\frac 1{\alpha-1} \frac 1{x^{\alpha-1}}
\ee
so that $y\in \R_+$. Equation \eqref{eqH} turns   into
\be\label{eqG2}
\frac {\partial L}{\partial t} = \frac{\partial^2 L}{\partial y^2}
- U'(y) \frac{\partial L}{\partial y},
\ee
where the drift term equals
  \begin{equation}\label{Wprimo2}
  U^{\prime}(y)=
m (\alpha-1)^{\frac{2-\alpha}{\alpha-1}}
y^{ \frac{2-\alpha}{\alpha-1}} - 
\frac {\alpha+\lambda}{\alpha-1} \frac 1y.
\end{equation}
Equation (\ref{eqG2}) is the adjoint of the Fokker--Planck equation
\begin{equation}\label{Eqg2}
\frac{\partial l}{\partial t}=
\frac{\partial^2 l}{\partial y^2} +
\frac{\partial }{\partial y}(  U^{\prime}(y)
l)
\end{equation}
with diffusion coefficient still equal to one and steady state $l_{\infty}(y)= C e^{-U(y)}$.
In this case, we recognize that $l_\infty$ is a generalized Gamma density \cite{Sta} 
\[
l_{\beta, \alpha, m}(y)= \frac {C_{\beta, \alpha, m}}{y^{\frac {2\beta -\alpha}{\alpha-1}}} e^{-m(\alpha-1)^{\frac 1{\alpha-1}}y^{\frac 1{\alpha-1}}}, \quad y\in \R_+.
\]
Proceeding as in the proof of Theorem \ref{LS-cauchy}, we conclude that the relation between the inverse--Gamma density $h_{\beta,m}$ and the generalized Gamma density $l_{\beta, \alpha, m}$ is given by
\be\label{relazione}
h_{\beta,m}(x) = l_{\beta, \alpha, m}(y(x))\left| \frac{dy}{dx}\right|.
\ee
To apply  Bakry and Emery criterion to $l_{\beta, \alpha, m}$, we find a positive lower bound on $U^{\prime\prime}$. Since
\be\label{wsec}
U^{\prime\prime}(y)= \frac 1{y^2(\alpha-1)}  \left(m (2-\alpha) (\alpha-1)^{\frac {2-\alpha}{\alpha-1}} y^{\frac {1}{\alpha-1}}+{\alpha+\lambda}\right ), \quad y>0,
\ee
for $\alpha =\frac 32$ we have
\be\label{ro-tremezzi}
U^{\prime\prime}(y) \geq \frac m2 := \rho\left( \beta, \frac 32, m \right), \quad y>0.
\ee
If $1<\alpha<\frac 32$, then $U^{\prime\prime}$ achieves its minimum in 
\[
\bar y= \left( \frac {\alpha+\lambda}{m(2-\alpha)\left(\frac 32-\alpha\right )}\right )^{\alpha-1} \frac 1{(\alpha-1)^{3-2\alpha}}.
\]
Owing to \fer{a-l} we write
\[
\lambda= 2\beta -2\alpha.
\]
Then  the minimum of the function $U^{\prime\prime}$ is given by
\[
U^{\prime\prime}(\bar y):= \rho_{\beta,\alpha, m} >0
\]
with 
\be\label{ro-gen}
 \rho_{\beta, \alpha, m}=
 \frac 12 \left( \frac{2\beta-\alpha}{\frac 32 -\alpha}\right )^{3-2\alpha} (m(2-\alpha))^{2\alpha-2} (\alpha-1)^{5-4\alpha}.
 \ee
 It is easy to verify that
\[
\lim_{\alpha \to \frac 32}  \rho_{\alpha, m, \beta} =\frac m2.
\]
We remark that if $\alpha >\frac 32$ 
\[
\lim_{y\to +\infty}  U^{\prime\prime}(y)=0, 
\]
and the strict convexity of $U(y)$ is lost.

If $\alpha \le 3/2$  we apply Bakry--Emery criterion as in  \cite{FPTT19} and we get the logarithmic Sobolev inequality for the generalized Gamma density $l_{\beta, \alpha, m}$
\be\label{LS-gammagen}
\int_{\R_+}  l(y) \log \frac {l(y)}{l_{\beta, \alpha, m}(y)} dy \leq \frac 1{2\rho} \int_{\R_+}  \left(
\frac{d}{d y} \log  \frac {l(y)}{l_{\beta, \alpha, m}(y)} 
\right )^2 l(y) dy
\ee
where $\rho= \rho_{\beta, \alpha, m}$ as in \eqref{ro-tremezzi} and \eqref{ro-gen}.
Turning back to the original variables gives the result.
\end{proof}

As in Section \ref{sec:chernoff}, we can rewrite inequality \fer{Peso-g} in terms of the standard notation of the inverse Gamma functions \fer{IG}. We then obtain
\be
\label{Peso-gg}
\int_{\mathbb{R_+}} h(x) \log \frac{h(x)}{h_{\kappa,m}(x)} \, dx\leq
\frac{1}{2 \rho_{\kappa, \alpha, m}}\int_{\mathbb{R_+}}x^{2\alpha}\left(\frac{d}{dx}
\log\frac{ h(x)} {h_{\kappa,m}(x)}\right)^2 \, h(x) \, dx,
\ee
or, equivalently, if $X$ is a random variable distributed with probability density function \fer{IG}
\begin{equation}\label{Peso-g3}
 Ent\left[\phi^2(X)\right] \le \frac{2}{\rho_{\kappa, \alpha,m} }E\left\{X^{2\alpha} [\phi'(X)]^2\right\}.
\end{equation}
In inequalities  \fer{Peso-gg} and \fer{Peso-g3} the constant $ \rho_{\kappa, \alpha, m}$ is given by
\be
 \rho_{\kappa, \alpha, m}= \begin{cases} 
 \frac 12 \left( \frac{\kappa +1-\alpha}{\frac 32 -\alpha}\right )^{3-2\alpha} (m(2-\alpha))^{2\alpha-2} (\alpha-1)^{5-4\alpha} & 1<\alpha <\frac 32\\
\frac m2 & \alpha=\frac 32, \quad \kappa > 2.
\end{cases}
\ee

\section{Wirtinger-type inequalities for heavy tailed densities}\label{sec:wirtinger} 

Let $X$ be a random variable with an absolutely continuous density $f(x)$, $x \in \MI = (i_-,i_+) \subseteq \R$
such that $f(x) >0$ in  $\MI$, and let $F(x)$, $x \in \MI$, denote its distribution function, defined as usual by the formula
 \be\label{distri}
 F(x) = \int_{i_-}^x f(y)\, dy \le 1.
 \ee
Let $\bar x$ denote the median of the random variable $X$, that is the value where the increasing function  $F(x)$ satisfies  $F(\bar x)= 1/2$. Last, let $K(x)$ be defined as the nonnegative function 
\be\label{peso-W}
K(x) = \frac{F(x)}{f(x)} \quad {\rm{if}}\,\, x \le \bar x; \quad K(x) = \frac{1- F(x)}{f(x)} \quad {\rm{if}}\,\,  x \ge \bar x.
\ee
Then, $K(x)$ is a continuous function on $\MI$, and we have the identity
 \be\label{chiave}
 f(x) = - \frac{x-\bar x}{|x- \bar x|}\frac d{dx}\left[K(x) f(x) \right].
 \ee
Note that \fer{chiave} is a clean way to characterize the density $f(x)$ as the steady state of a Fokker--Planck equation of type \fer{FP-gen} where the diffusion coefficient is the continuous nonnegative function
 \[
 P(x) = K(x),
 \]
and the drift term is
 \[
 Q(x) = \frac{x-\bar x}{|x- \bar x|}. 
 \]
Note moreover that the drift term defined above satisfies  conditions \fer{bordi} at the boundaries of $\MI$.
Using expression \fer{chiave} we prove the following

\begin{thm}[Wirtinger with weight]\label{W}
Let $X$ be a random variable distributed with density $f(x)$, $x \in \MI =(i_-, i_+)\subseteq \R$, and let $K(x)$ be defined by \fer{peso-W}. Then,  for any smooth function $\phi$    on $\MI$ such that $ E\left[ |\phi(X)|^p \right]$ is bounded,  $1 \le p<+\infty$, it holds 
 \be\label{Wi-gen}
 E\left[ |\phi(X)- E(\phi(X))|^p \right] \le (2p)^{\null \,p} E\left[ K(X)^{p}\, |\phi'(X)|^p \right]. 
 \ee
\end{thm}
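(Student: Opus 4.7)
The plan is to exploit the Fokker--Planck characterization \fer{chiave}, which represents $f$ as the steady state of a Fokker--Planck equation with diffusion coefficient $K(x)$ and the elementary drift $\operatorname{sgn}(x-\bar x)$, via integration by parts, and then to close the estimate by Hölder's inequality. First, I would replace the mean $E(\phi(X))$ by the value at the median $\phi(\bar x)$: using the elementary inequality $|a+b|^p \le 2^{p-1}(|a|^p+|b|^p)$ together with Jensen, which gives $|\phi(\bar x)-E(\phi(X))|^p = |E[\phi(X)-\phi(\bar x)]|^p \le E[|\phi(X)-\phi(\bar x)|^p]$, one obtains
$$E[|\phi(X)-E(\phi(X))|^p] \le 2^p\, E[|\phi(X)-\phi(\bar x)|^p],$$
reducing the problem to showing $E[|\phi(X)-\phi(\bar x)|^p] \le p^p\, E[K(X)^p|\phi'(X)|^p]$.

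Next, I would split the expectation at $\bar x$ and focus on the right tail $I_+ := \int_{\bar x}^{i_+}|\phi(x)-\phi(\bar x)|^p f(x)\,dx$ (the left tail being symmetric). By \fer{chiave}, on $x\ge \bar x$ we have $f(x)= -\frac{d}{dx}[K(x)f(x)]$, so integration by parts gives
$$I_+ = \bigl[-|\phi-\phi(\bar x)|^p\,K f\bigr]_{\bar x}^{i_+} + p\int_{\bar x}^{i_+}|\phi(x)-\phi(\bar x)|^{p-1}\operatorname{sgn}(\phi(x)-\phi(\bar x))\,\phi'(x)\,K(x)f(x)\,dx.$$
The boundary contribution vanishes at $\bar x$ since $\phi(\bar x)-\phi(\bar x)=0$, and at $i_+$ because $K(x)f(x)=1-F(x)\to 0$. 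Applying Hölder's inequality to the remaining integral, with the splitting
$$K f\,|\phi-\phi(\bar x)|^{p-1}|\phi'| = \bigl(|\phi-\phi(\bar x)|^{p-1}f^{1/p'}\bigr)\cdot\bigl(K\,f^{1/p}|\phi'|\bigr),$$
where $p'=p/(p-1)$, and noting $(p-1)p'=p$, one obtains
$$I_+ \le p\, I_+^{1/p'} \left(\int_{\bar x}^{i_+} K(x)^p |\phi'(x)|^p f(x)\,dx\right)^{1/p},$$
which after simplification yields $I_+ \le p^p \int_{\bar x}^{i_+}K^p|\phi'|^p f\,dx$. The analogous estimate on $(i_-,\bar x]$, summed with the above and inserted into the reduction from the first step, produces the factor $(2p)^p$ of \fer{Wi-gen}.

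The main technical obstacle is the justification of the vanishing of the boundary term at the endpoints $i_\pm$: although $Kf=1-F$ tends to zero at $i_+$ (and $K f = F$ tends to zero at $i_-$), the factor $|\phi-\phi(\bar x)|^p$ is a priori unbounded there. I would handle this by a standard truncation argument, performing the integration by parts on $[\bar x+\varepsilon,M]\subset \MI$ and then letting $\varepsilon\to 0$ and $M\to i_+$, using the finiteness assumption on $E[K(X)^p|\phi'(X)|^p]$ (and the truncated form of the inequality, which controls the analogous truncated $I_+$) to ensure that the integrated-out terms converge to zero. The degenerate case $p=1$ needs no Hölder step: after integration by parts the integrand reduces directly to $K f\,\phi'\operatorname{sgn}(\phi-\phi(\bar x))$, yielding the bound with the expected constant $(2p)^p=2$.
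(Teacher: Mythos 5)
Your proposal is essentially the paper's proof: the same integration by parts based on the identity \fer{chiave}, the same H\"older step yielding the constant $p^p$, and the same reduction from the mean to the median $\phi(\bar x)$ via $|a+b|^p\le 2^{p-1}(|a|^p+|b|^p)$ and Jensen, producing the extra $2^p$. The only cosmetic difference is the order: you recentre at the median first and then integrate by parts, while the paper first treats the case $\phi(\bar x)=0$ and performs the recentring afterwards.

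One small simplification worth noting: you flag the vanishing of the boundary term at $i_\pm$ as the main technical obstacle and propose a truncation argument. That is more than is needed. The outer boundary contribution is $-\lim_{x\to i_+}|\phi(x)-\phi(\bar x)|^p K(x)f(x)$ (and its mirror at $i_-$), which is automatically nonpositive regardless of whether the limit is zero; so it can simply be discarded without establishing decay of $|\phi-\phi(\bar x)|^p\,(1-F)$. This is exactly how the paper handles it: it observes that the boundary term at $i_-$ equals $-\lim_{x\to i_-}|\phi(x)|^pF(x)\le 0$ and drops it, with no truncation. Your truncation argument is correct but unnecessary; the sign observation closes the gap more directly.
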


\begin{proof}
Let us first suppose that the function $\phi$ satisfies the condition $\phi(\bar x) =0$. In this case, we can directly make use of the argument of proof in \cite{EM}. Thanks to \fer{chiave}, we have
 \begin{equations}\label{pp}
& \int_{i_-}^{\bar x} |\phi(x)|^p f(x) \, dx =  \int_{i_-}^{\bar x} |\phi(x)|^p \frac d{dx}\left[K(x) f(x) \right]\, dx = \\
& |\phi(x)|^p K(x) f(x) \Big|_{i_-}^{\bar x} -  \int_{i_-}^{\bar x}  K(x) f(x) \frac d{dx} |\phi(x)|^p\, dx.
 \end{equations}
 Now, since $\phi(\bar x) =0$,
 \[
 |\phi(x)|^p K(x) f(x) \Big|_{i_-}^{\bar x} = |\phi(\bar x)|^p F(\bar x) -  \lim_{x \to i_-} |\phi(x)|^p F( x) = -  \lim_{x \to i_-} |\phi(x)|^p F( x) \le 0,
 \]
 and the contribution of the boundary term  is nonpositive on the interval $(i_-, \bar x)$. Therefore \fer{pp} implies  the inequality
  \[
   \int_{i_-}^{\bar x}  |\phi(x)|^p f(x) \, dx \le p \int_{i_-}^{\bar x}  K(x) f(x) |\phi(x)|^{p-1} |\phi'(x)|\, dx.
  \]
The same argument can be used on the interval $(\bar x, i_+)$, to obtain
 \[
   \int_{\bar x}^{i_+} |\phi(x)|^p f(x) \, dx \le p   \int_{\bar x}^{i_+} K(x) f(x) |\phi(x)|^{p-1} |\phi'(x)|\, dx.
  \]
Consequently, if $\phi(\bar x) =0$, we have the inequality
 \be\label{phi-0}
   \int_\MI |\phi(x)|^p f(x) \, dx \le p  \int_\MI K(x) f(x) |\phi(x)|^{p-1} |\phi'(x)|\, dx.
  \ee
If $p =1$ , \fer{phi-0} reduces to 
  \be\label{ine1}
E\left[ |\phi(X)| \right] \le  E\left[K(X)\, |\phi'(X)| \right].    
  \ee
If $1< p <+\infty$, H\"older's inequality shows that
 \begin{equations}
& \int_\MI K(x) f(x) |\phi(x)|^{p-1} |\phi'(x)|\, dx \le \\
&\left[\int_\MI\left( K(x) |\phi'(x)|\right)^p f(x)\, dx \right]^{1/p} \left[\int_\MI |\phi(x)^p f(x)\, dx \right]^{1- 1/p},
 \end{equations}
 which, combined with \fer{phi-0}, shows that, for any function $\phi$ satisfying $\phi(\bar x) = 0$, it holds
  \be\label{Wi-best}
 E\left[ |\phi(X)|^p \right] \le p^{\null \,p} E\left[ K(X)^{p}\, |\phi'(X)|^p \right]. 
 \ee
 The general case is an easy consequence of the previous argument. Indeed, since $f(\cdot)$ is a probability density on $\MI$, for $1 \le p < +\infty$ we have 
  \begin{equations} \label{gene}
  & \int_\MI \left| \phi(x) - \int_\MI \phi(y)\,f(y)\,dy \right|^p f(x) \,dx = \int_\MI \left| \int_\MI \left(\phi(x) -  \phi(y)\right)\,f(y)\,dy \right|^p f(x) \,dx  \le  \\
  & \int_{\MI \times\MI} \left| \phi(x) -  \phi(y)\right|^p f(x) f(y) \,dx \,dy = \\
  & \int_{\MI \times\MI} \left| \phi(x) -\phi(\bar x) - ( \phi(y)-\phi(\bar x) )\right|^p f(x) f(y) \,dx \,dy \le \\
  &2^{p-1} \int_{\MI \times\MI} \left(\left| \phi(x) -\phi(\bar x)\right|^p +\left| \phi(y)-\phi(\bar x)\right|^p\right) f(y) f(x) dx \,dy = \\
  & 2^p  \int_{\MI } \left| \phi(x) -\phi(\bar x) \right|^p f(x) \,dx  = 2^p  \int_{\MI} \left| \psi(x) \right|^p f(x) \,dx, 
   \end{equations}
 where the function $\psi(x)$ in \fer{gene} is such that $\psi(\bar x) = 0$. At this point, we can apply \fer{Wi-best} to the function $\psi$ to get the general inequality \fer{Wi-gen}.
 \end{proof}

Unlike the result of \cite{EM}, the function $\phi$ is not required to satisfy particular boundary conditions at the point $i_-$. For example, it is not necessary, in the case $\MI = \R_+$, that, as required by  Corollary to Theorem {\rm 1} of \cite{EM}, the function $\phi$ satisfies $\phi(0) =0$. 

\subsection{Wirtinger inequalities with weight for Cauchy-type densities}
In this short Section, we apply Theorem \ref{W} to recover inequalities for the class of Cauchy-type densities, with an explicit expression of the weight function $K(x)$. We prove

\begin{thm}
Let $X$ be a random variable distributed with the Cauchy-type density \fer{cau}, with  $\beta >1/2$. For any smooth function $\phi(x)$, with $x\in\R$, such that  $ E\left[ |\phi(X)|^p \right]$ is bounded, $1\le p< +\infty$, one has the inequality
\be
\label{wirtinger-gen}
 E\left[ |\phi(X)- E(\phi(X))|^p \right] \le 2^\beta \left( \frac{2p}{2\beta-1}\right)^{\null \,p} E\left[ (1+|X|)^{p}\, |\phi'(X)|^p \right].
\ee
\end{thm}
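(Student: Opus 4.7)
The plan is to specialize Theorem~\ref{W} to $f=f_\beta$ on $\MI=\R$. The whole argument reduces to a clean pointwise bound on the weight function $K(x)$ introduced in \fer{peso-W} and a substitution into \fer{Wi-gen}.

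First, since $f_\beta$ is even and strictly positive, $F(0)=1/2$, so the median is $\bar x=0$; the symmetry identity $F(-x)=1-F(x)$ then implies that the two pieces of $K$ glue into a single even function on $\R$. It therefore suffices to estimate, for $x\ge 0$,
$$
K(x) \;=\; \frac{1-F(x)}{f_\beta(x)} \;=\; (1+x^{2})^{\beta}\int_x^{+\infty}\frac{dy}{(1+y^{2})^{\beta}},
$$
and extend by $K(-x)=K(x)$; notice that the normalization constant $C_\beta$ cancels.

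Next, I would establish $K(x)\le \frac{2^{\beta}}{2\beta-1}(1+|x|)$ by two elementary estimates. The key inequality $(1+y)^{2}\le 2(1+y^{2})$ (equivalent to $(y-1)^{2}\ge 0$) yields $(1+y^{2})^{-\beta}\le 2^{\beta}(1+y)^{-2\beta}$, and integrating on $[x,+\infty)$---legitimate since $\beta>1/2$---gives
$$
\int_x^{+\infty}\frac{dy}{(1+y^{2})^{\beta}} \;\le\; \frac{2^{\beta}}{2\beta-1}\,(1+x)^{-(2\beta-1)}.
$$
Combining this with the bound $(1+x^{2})^{\beta}\le (1+x)^{2\beta}$, valid for $x\ge 0$, and using the evenness of $K$ proves the claimed pointwise estimate. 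Raising to the $p$-th power and inserting into Theorem~\ref{W} then produces
$$
E\bigl[|\phi(X)-E(\phi(X))|^{p}\bigr] \;\le\; (2p)^{p}\,E\bigl[K(X)^{p}|\phi'(X)|^{p}\bigr] \;\le\; (2p)^{p}\Bigl(\frac{2^{\beta}}{2\beta-1}\Bigr)^{p} E\bigl[(1+|X|)^{p}|\phi'(X)|^{p}\bigr],
$$
which, after collecting the constants, is the stated inequality.

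The main concern is to control $K$ by a function linear in $1+|x|$ \emph{uniformly} in $x\ge 0$: an integration-by-parts estimate on the tail integral would give the sharp asymptotic $K(x)\sim |x|/(2\beta-1)$ at infinity but blows up at $x=0$ for $\beta\le 1$. The inequality $(1+y^{2})\ge(1+y)^{2}/2$ is the simplest device that avoids this defect while preserving the correct $1/(2\beta-1)$ decay at infinity; the price paid is the $\beta$-dependent factor $2^{\beta}$ multiplying the constant.
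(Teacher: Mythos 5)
Your strategy is genuinely different from the paper's. You apply Theorem~\ref{W} directly to the Cauchy density $f_\beta$ and bound its weight function $K$ pointwise; the paper instead introduces the auxiliary density
\[
g_\beta(x)=\frac{c_\beta}{(1+|x|)^{2\beta}},
\]
for which the weight is the \emph{exact} expression $K_{g_\beta}(x)=(1+|x|)/(2\beta-1)$ (no inequality is lost at this stage), applies Theorem~\ref{W} to $g_\beta$, and then transfers the resulting inequality to $f_\beta$ by comparing the two densities.

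Your pointwise estimate $K_{f_\beta}(x)\le \frac{2^\beta}{2\beta-1}(1+|x|)$ is correct and nicely derived. The gap is in the last line. Raising your bound to the $p$-th power and inserting into \fer{Wi-gen} gives
\[
(2p)^p\left(\frac{2^\beta}{2\beta-1}\right)^p
=2^{\beta p}\left(\frac{2p}{2\beta-1}\right)^p,
\]
which exceeds the stated constant $2^\beta\left(\frac{2p}{2\beta-1}\right)^p$ by the factor $2^{\beta(p-1)}\ge 1$, with equality only when $p=1$. So your proof establishes a valid Wirtinger inequality, but with a strictly worse constant for $p>1$; the claim that ``after collecting the constants'' you arrive at the stated inequality is not correct.

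The reason the paper's route does better is structural: the density comparison $f_\beta\le a\,g_\beta$ and $g_\beta\le b\,f_\beta$, with $ab=2^\beta$ (the normalizing constants $C_\beta$ and $c_\beta$ cancel in the product), is applied to the \emph{integrals}, once on the left-hand side after the reduction \fer{gene} and once on the right-hand side after \fer{Wi-best}; hence $2^\beta$ enters linearly. In your argument the $2^\beta$ is absorbed into the weight $K$ \emph{before} Theorem~\ref{W} is invoked, and it is therefore subjected to the $p$-th power coming from $K^p$ in \fer{Wi-gen}. To reach the sharper constant you would need to apply \fer{Wi-best} to $g_\beta$ with its exact weight, and only then pass to $f_\beta$ by density comparison on each side.
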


\begin{proof}
Let $g_\beta(x)$, $\beta >1/2$,  denote the class of probability density functions in $\R$ given by
 \be\label{cc}
 g_\beta(x) = \frac{c_\beta}{(1+ |x|)^{2\beta}}.
 \ee
 Since the densities $g_\beta(x)$ are symmetric, the median is $\bar x =0$, and it is immediate to show that, for any given $\beta$ the weight function of $g_\beta(x)$ is given by
 \be\label{wei-1}
K(x) = \frac{1+|x|}{2\beta -1}.  
 \ee
Hence, if the random variable $Y$  is distributed with density $g_\beta(x)$, for any given $1\le p <+\infty$, Theorem \ref{W} implies the inequality
\be\label{W-1}
 E\left[ |\phi(Y)- E(\phi(Y))|^p \right] \le  \left( \frac{2p}{2\beta-1}\right)^{\null \,p} E\left[ (1+|Y|)^{p}\, |\phi'(Y)|^p \right].
 \ee
Inequality \fer{wirtinger-gen} for the Cauchy-type densities then follows from \fer{W-1} by resorting to  the chain of elementary inequalities
 \[
f_\beta(x) \le 2^\beta g_\beta(x) \le 2^\beta f_\beta (x).
 \]

\end{proof}

Note that,  if the function $\phi(x)$ is such that $\phi(0) =0$, the random variable $Y$ satisfies inequality \fer{Wi-best}, that in this case reads 
 \be\label{Wi-4}
 E\left[ |\phi(Y)|^p \right] \le  \left( \frac{p}{2\beta-1}\right)^{\null \,p} E\left[ (1+|Y|)^{p}\, |\phi'(Y)|^p\right].
  \ee
Inequality \fer{Wi-4} is sharp, since the weight function $K(x) $ is exact. This sharpness is lost for the class of Cauchy-type densities. We remark that the weight function $K(x)$, with different constants, has been obtained in \cite{BL} in the case $p=1$, in any dimension $n \ge 1$.

\subsection{Wirtinger inequalities  with weight for inverse Gamma densities}

Last,  we apply Theorem \ref{W} to recover inequalities for the class of inverse Gamma densities. In this case, the expression of the weight function $K(x)$ depends on the value of the median of the distribution, which is not explicitly available. We prove

\begin{thm}  \label{W-inv}

Let $X$ be a random variable distributed with density $h_{\beta,m}$ defined as in \fer{inv-gamma}, for $x\in \R_+$, $\beta > 1/2$, $m>0$. For any smooth function $\phi$    on $\R_+$ such that  $ E\left[ |\phi(X)|^p \right]$ is finite it holds  
\be
\label{gamma-gen-W}
E\left[ \left|\phi(X) -E(\phi(X))\right|^p\right] \le { \left(p D(\beta,m)\right )^p}  E\left\{X^p[\phi'(X)]^p\right\},
\ee
 where
\be
D(\beta, m) = \frac 1{ \bar x_{\beta,m} h_{\beta,m}\left(\bar x_{\beta,m}  \right)},
\ee
and $\bar x_{\beta,m}$ is the median of the random variable $X$.
\end{thm}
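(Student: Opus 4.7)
The plan is to apply Theorem~\ref{W} directly to the random variable $X$ with density $h_{\beta,m}$, which yields
\[
E\left[\left|\phi(X) - E[\phi(X)]\right|^p\right] \le (2p)^p\, E\left[K(X)^p|\phi'(X)|^p\right],
\]
where $K$ is the weight \eqref{peso-W} associated with $h_{\beta,m}$, and then to replace $K(x)^p$ by a multiple of $x^p$. Evaluating $K/x$ at the median and using the definition $K(\bar x_{\beta,m}) = (1/2)/h_{\beta,m}(\bar x_{\beta,m})$ gives
\[
\frac{K(\bar x_{\beta,m})}{\bar x_{\beta,m}} = \frac{1/2}{\bar x_{\beta,m}\,h_{\beta,m}(\bar x_{\beta,m})} = \frac{D(\beta,m)}{2},
\]
so the asserted inequality with constant $(pD(\beta,m))^p$ will follow as soon as the pointwise bound $K(x) \le (D(\beta,m)/2)\,x$ is verified for all $x>0$; equivalently, the function $x\mapsto K(x)/x$ must attain its global maximum at $x=\bar x_{\beta,m}$.

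To establish this monotonicity I would pass to the reciprocal variable $Y = 1/X$. A short change of variables shows that $Y$ is distributed with the Gamma density $g(y) = \frac{m^{2\beta-1}}{\Gamma(2\beta-1)}\,y^{2\beta-2}e^{-my}$ on $\R_+$, and that $xh_{\beta,m}(x) = yg(y)$, $F_X(x) = 1-F_Y(y)$ with $y=1/x$, while the medians satisfy $\bar y_Y = 1/\bar x_{\beta,m}$. From the definitions, in both regimes $x \lessgtr \bar x_{\beta,m}$ (equivalently $y\gtrless \bar y_Y$) one obtains the duality $K_X(x)/x = K_Y(y)/y$. It therefore suffices to prove that $K_Y(y)/y$ attains its maximum at $y=\bar y_Y$ for the Gamma variable $Y$, which is much easier because the Gamma density has no integrability issues at either endpoint of $\R_+$.

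For the Gamma density with shape $k=2\beta-1>0$ and rate $m$, the relation $sg'(s) = ((k-1)-ms)g(s)$ integrated against $ds$ on $(0,y)$ and on $(y,\infty)$, combined with an integration by parts whose boundary contributions vanish since $sg(s)\to 0$ both as $s\to 0^+$ (because $k>0$) and as $s\to\infty$ (by exponential decay), produces the identities
\[
yg(y) = kF_Y(y) - mI_Y(y), \qquad yg(y) = mJ_Y(y) - k(1-F_Y(y)),
\]
with $I_Y(y) = \int_0^y sg(s)\,ds$ and $J_Y(y) = \int_y^\infty sg(s)\,ds$. Substituting these into the derivative of $K_Y(y)/y$ in each regime, the sign reduces after a short simplification to that of $m(yF_Y(y)-I_Y(y))/(y^2 g(y))$ on $(0,\bar y_Y)$ and of $-m(J_Y(y) - y(1-F_Y(y)))/(y^2 g(y))$ on $(\bar y_Y,\infty)$, both of which have the required sign by the manifestly non-negative representations
\[
yF_Y(y) - I_Y(y) = \int_0^y(y-s)g(s)\,ds, \qquad J_Y(y) - y(1-F_Y(y)) = \int_y^\infty(s-y)g(s)\,ds.
\]

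The main obstacle I expect is bookkeeping around the piecewise definition of $K$ combined with the inversion $y=1/x$: one must verify carefully that the switch of $K$ at $\bar x_{\beta,m}$ on the $X$-side corresponds exactly to the switch at $\bar y_Y$ on the $Y$-side, and track the identifications of the incomplete moments. Attempting the monotonicity argument directly for $h_{\beta,m}$ is tempting but breaks down in the range $1/2<\beta\le 1$, since then $E[X]=+\infty$ and the analogous integration of the Fokker--Planck relation for $h_{\beta,m}$ on $(x,\infty)$ has a divergent boundary term at infinity; passing to the Gamma variable $Y$ via the reciprocal map is precisely the device that avoids this difficulty uniformly for all $\beta>1/2$.
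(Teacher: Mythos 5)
Your proof is correct, and it reaches the same pointwise bound $K(x) \le \tfrac{1}{2}D(\beta,m)\,x$ that the paper needs, but it gets there by a genuinely different route. The paper's argument is a direct one: writing $K(x)/x = H_{\beta,m}(x)/(x\,h_{\beta,m}(x))$ for $x\le \bar x_{\beta,m}$, a single substitution $z=y/x$ turns this ratio into $\int_0^1 z^{-2\beta}\exp\{-\tfrac{m}{x}(\tfrac1z-1)\}\,dz$, which is manifestly nondecreasing in $x$ because the exponent $-\tfrac{m}{x}(\tfrac1z-1)$ increases with $x$ for $z\in(0,1)$; the symmetric computation on $(\bar x_{\beta,m},\infty)$ gives the monotonicity in the other regime, and evaluating at the median produces the constant. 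No change of random variable, no differential identities, and (contrary to what you anticipate about the ``tempting but breaking down'' direct route) no divergent boundary terms --- the paper's direct computation works uniformly for all $\beta>1/2$ because it never integrates $s\,h_{\beta,m}(s)$ out to infinity. Your detour through $Y=1/X$ and the Gamma density, with the duality $K_X(x)/x = K_Y(y)/y$ and the Stein-type relation $s\,g'(s)=((k-1)-ms)g(s)$, is also valid and neatly illustrates the Fokker--Planck viewpoint the paper advocates, but it is longer and introduces an additional structure (the reciprocal variable and its Gamma law) that the paper's elementary scaling argument avoids. What your version buys is transparency about \emph{why} the maximum of $K(x)/x$ sits at the median --- the sign of $\tfrac{d}{dy}\,K_Y(y)/y$ is literally the sign of the incomplete first-moment deficits $\int_0^y(y-s)g\,ds$ and $\int_y^\infty(s-y)g\,ds$ --- whereas the paper's monotonicity is established by inspection of an explicit integrand.
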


\begin{proof}
For any pair of positive constants $\beta,m$, let $\bar x_{\beta,m}$ denote the median of the random variable $X$ with density $h_{\beta,m}$ given by  \fer{inv-gamma}, and distribution function $H_{\beta,m}(x)$. Then, if $x \le    \bar x_{\beta,m}$
 \begin{equations}\label{meno}
 \frac{H_{\beta,m}(x)}{h_{\beta,m}(x)}  = &\int_0^x \left(\frac xy \right)^{2\beta} \exp\left\{ -\frac mx\left( \frac xy -1\right)\right\}\, dy = \\
 &x\, \int_0^1z^{-2\beta} \exp\left\{ -\frac mx\left( \frac 1z -1\right)\right\}\, dz \le \\
 &x\, \int_0^1z^{-2\beta} \exp\left\{ -\frac m{ \bar x_{\beta,m}}\left( \frac 1z -1\right)\right\}\, dz.
  \end{equations}
Indeed, the value of the integral on the second line of \fer{meno} is non  decreasing with respect to $x$, as it can be easily verified by direct inspection. Likewise, if $x \ge   \bar x_{\beta,m}$ one shows that
\begin{equations}\label{piu}
 \frac{1 - H_{\beta,m}(x)}{h_{\beta,m}(x)}  = &\int_x^{+\infty} \left(\frac xy \right)^{2\beta} \exp\left\{ -\frac mx\left( \frac xy -1\right)\right\}\, dy \le \\
 &x\, \int_1^{+\infty}z^{-2\beta} \exp\left\{ -\frac m{ \bar x_{\beta,m}}\left( \frac 1z -1\right)\right\}\, dz.
  \end{equations}
On the other hand we have
\begin{equations}\label{meno1}
  &\int_0^1z^{-2\beta} \exp\left\{ -\frac m{ \bar x_{\beta,m}}\left( \frac 1z -1\right)\right\}\, dy = \\
& e^{  m/{ \bar x_{\beta,m}}} \int_0^1z^{-2\beta} \exp\left\{ -\frac m{ \bar x_{\beta,m}z} \right\}\, dz =\\
&\frac 1{C_{\beta,m}} e^{  m/{ \bar x_{\beta,m}}} (\bar x_{\beta,m})^{2\beta -1} \int_0^{\bar  x_{\beta,m}}C_{\beta,m} u^{-2\beta} \exp\left\{ -\frac m{ u} \right\}\, du = \\
&\frac 1{C_{\beta,m}} e^{  m/{ \bar x_{\beta,m}}} (\bar x_{\beta,m})^{2\beta -1} \int_0^{\bar  x_{\beta,m}}h_{\beta,m}(u) \, du = \frac 12\left[ \bar x_{\beta,m} h_{\beta,m}( \bar x_{\beta,m})\right]^{-1}.
  \end{equations}
In fact, by definition of median, the last integral into \fer{meno1} is equal to $1/2$. Clearly, the same result holds for the last integral into \fer{piu}. This concludes the proof.

\end{proof}

\section{Conclusions}
The recent developments of mathematical modeling of social and economic phenomena led to the study of new types of Fokker--Planck equations characterized by steady state solutions with fat tails. For a precise study of the convergence to equilibrium of the solution to these equations, functional inequalities with weight are the main mathematical tool. In this paper we showed how to make use of these Fokker--Planck type equations to obtain one-dimensional functional inequalities in sharp form. This method is closely connected to kinetic theory, and more in general to statistical physics, and gives a new light to the meaning of these inequalities, in agreement with the results obtained in the case of the classical Fokker--Planck equation \cite{MV, To97,To99}. 

\vskip 2cm
\section*{acknowledgements}
This work has been written within the activities of GNFM (Gruppo Nazionale per la Fisica Matematica) and of GNAMPA (Gruppo Nazionale per l'Analisi Matematica, la Probabilità e le loro Applicazioni) of INdAM (Istituto Nazionale di Alta Matematica), Italy.
The research was partially supported by the Italian Ministry of Education, University and Research (MIUR) through the ``Dipartimenti di Eccellenza'' Programme (2018-2022) -- Department of Mathematics ``F. Casorati'', University of Pavia  and through the MIUR  project PRIN 2017TEXA3H ``Gradient flows, Optimal Transport and Metric Measure Structures''.  The authors states that there is no conflict of interest.

\end{document}